\documentclass[11pt]{article}
\usepackage{amsfonts}
\usepackage{amsmath}
\usepackage{amsthm,nicefrac} 
\usepackage{graphicx}
\usepackage{mathrsfs,booktabs}
\usepackage{chicago,overpic}
\usepackage[small]{caption}

\marginparwidth 2.2cm




\newtheorem{proposition}{Proposition}[section]
\newtheorem{lemma}[proposition]{Lemma}
\newtheorem{theorem}[proposition]{Theorem}

\theoremstyle{definition}
\newtheorem{definition}[proposition]{Definition}
\newtheorem{example}[proposition]{Example}
\newtheorem{algorithm}[proposition]{Algorithm}

\theoremstyle{remark}

\newcommand{\e}{{\rm e}}
\newcommand{\E}{{\mathbb E}}
\newcommand{\bbF}{{\mathbb F}}

\renewcommand{\P}{{\mathbb P}}
\newcommand{\Q}{{\mathbb Q}}

\newcommand{\R}{{\mathbb R}}
\newcommand{\N}{{\mathbb N}}

\renewenvironment{description}{\list{}{
\leftmargin 12pt \itemindent8pt
}
}{
  \endlist
}




\newcommand{\Ind}{{ 1}}
\newcommand{\ind}[1]{\Ind_{\{#1\}}}

\newcommand{\cA}{{\mathcal A}}

\newcommand{\cB}{{\mathcal B}}

\newcommand{\cC}{{\mathcal C}}
\newcommand{\cF}{{\mathcal F}}

\newcommand{\cN}{{\mathcal N}}

\renewcommand{\ind}[1]{\Ind_{\{#1\}}}

\newcommand{\ccL}{\mathscr{L}}


\newenvironment{enumeratei}{\begin{enumerate} 
}{
  \end{enumerate}
}

\newcommand{\U}{\Upsilon}

\renewcommand{\e}{e}

\newcommand{\Nspace}{\mathcal{N}^2(0,T;{H})}
\newcommand{\Gene}{\mathscr{L}}


\addtolength{\topmargin}{-12mm}
 \addtolength{\oddsidemargin}{-20mm}
\addtolength{\evensidemargin}{-8mm}
\addtolength{\textwidth}{38mm}
\addtolength{\textheight}{30mm}
\parskip0.5ex

\begin{document}
\begin{center}
\large {\Large \bf On Galerkin Approximations for the Zakai Equation\\
with Diffusive and Point Process Observations}\\[0.3cm] %
{\sc R\"udiger Frey\footnote{Institute for Statistics and mathematics, WU Vienna, mail:
ruediger.frey@wu.ac.at},
Thorsten Schmidt\footnote{Department of mathematics, Chemnitz University of Technology, mail:
thorsten.schmidt@mathematik.tu-chemnitz.de} and Ling Xu\footnote{School of Economics and Administration, China University of Petroleum, mail: Ling.Xu@math.uni-leipzig.de. Part of this work was written while the first and the third author were at the department of mathematics, University of Leipzig.
Financial support from the International Max Planck Research School `Mathematics in the
Sciences', Leipzig and from the German Science foundation (DFG) is gratefully acknowledged. We thank two anonymous referees for their valuable and helpful comments.}}\\[0.4cm]
{\it \today}\\[0.9cm]
\end{center}

\abstract{This paper studies Galerkin approximations  applied to the  Zakai
equation of stochastic  filtering. The basic idea of this approach is to project the
infinite-dimensional Zakai equation  onto some finite-dimensional subspace generated by smooth basis functions; this leads to a finite-dimensional system of stochastic differential equations that can be solved
numerically. The contribution of the paper is twofold. On the theoretical side, existing
convergence results are extended to filtering models with observations of point-process
or mixed  type. On the applied side, various issues related to the numerical
implementation of the method are considered; in particular, we propose to work with  a subspace that is constructed from a basis of Hermite polynomials.   The paper closes with a numerical case study. }

\paragraph{Keywords} Stochastic filtering, Zakai equation, point processes, Galerkin
approximation, Hermite polynomials.

\paragraph{AMS classification} 60G35, 60H15, 65C30, 92E11

\section{Introduction}

Stochastic  filtering deals with the recursive estimation of the
conditional distribution of a signal process $X$ given some form of noisy
observation of $X$. In the standard continuous time filtering models this
noisy observation is generated by a process $Z$ with dynamics of the form
\begin{equation} \label{eq:diffusion-observation}
Z_t = Z_0 + \int_0^t h(X_s) ds +W_t
\end{equation}
for some Brownian motion $W$ that is independent of $X$. In that case $\pi_t(dx)$, the
conditional distribution of $X_t$ given $\sigma(Z_s\colon s \le t)$, can be characterized
by a measure-valued stochastic partial differential equation (SPDE) known  as
\emph{Zakai equation}. This  SPDE is in general an  infinite-dimensional equation that
cannot be solved directly.  In view of the practical relevance of filtering, a wide range
of methods for the approximation of this equation by finite-dimensional systems and for
the numerical solution of filtering problems in general has therefore  been developed; a
good survey is given in \citeN{bib:budhiraja-chen-lee-07} or in
\citeN{bib:bain-crisan-08}. Popular numerical methods for filtering problems include the
extended Kalman filter (\citeN{bib:jazwinski-70}); quantization
(\citeN{bib:gobet-et-al-06}); Markov-chain approximation (\citeN{bib:kushner-dupuis-01},
\citeN{bib:dimasi-runggaldier-82}); spectral methods (\citeN{bib:lototsky-06}) and
simulation methods such as particle filtering  (\citeN{bib:crisan-moral-lyons-99}).

If the signal $X$ is a diffusion process with uniformly parabolic generator the
conditional distribution $\pi_t(dx)$  admits a Lebesgue density that solves a SPDE in a
suitable function space, the so-called Zakai equation for the unnormalized conditional
density; see for instance \citeN{bib:pardoux-79}. Galerkin approximations are an
important numerical technique for dealing with this SPDE. The basic idea of this approach
is to project the Zakai equation for the conditional density onto some finite-dimensional
subspace $H_n$ generated by  basis functions $e_1, \dots, e_n$. This leads to an
$n$-dimensional SDE system for the Fourier coefficients of the solution of the  projected
equation; this SDE system can then be solved by numerical methods for ``ordinary'' SDEs.

Theoretical and numerical aspects of Galerkin approximations are well understood for the
case of pure diffusion observation as in \eqref{eq:diffusion-observation}; see for
instance  \citeN{bib:germani-piccioni-84} and \citeN{bib:germani-piccioni-87} for
convergence results for Galerkin approximations and \citeN{bib:ahmed-radaideh-97} or \citeN{bib:ahmed-99} for a
case study and a discussion of numerical aspects.
Much less is known for the case of
mixed observations of diffusion \emph{and} point-process type. In this paper we therefore
consider a model where a doubly stochastic point process $N$ with intensity
$\lambda(X_t)$ is observable in addition to the process $Z$. Models of this type arise
naturally in credit risk modelling (see Example~\ref{ex:credit-risk} below) or in the
modelling of high frequency data in finance (\citeN{bib:frey-runggaldier-01},
\citeN{bib:cvitanic-liptser-rozovski-06}). Outside the field of financial mathematics
point-process information plays among others a crucial role in the analysis of queueing
systems (\citeN{bib:bremaud-81}).

Our contribution is twofold. On the theoretical side we generalize the convergence
results of \citeN{bib:germani-piccioni-87} to the case of mixed observations.\footnote{In this context we mention the recent work \citeN{bib:hausenblas-08} where theoretical properties of a finite element approximation of certain SPDEs driven by a Poisson random measure of pure jump type are studied.}
On the applied side we
extend the numerical analysis of \citeN{bib:ahmed-radaideh-97} in various ways: to begin
with, we propose to use Hermite polynomials as basis functions (instead of Gaussian basis
functions);  we explain how to change the basis adaptively in order to deal with sudden
shifts in location and scale of the conditional density caused for instance by jumps in
the observation, and we compare several methods for the numerical implementation of the
SDE-system that results from the Galerkin approximation. An extensive simulation study
shows that the Galerkin approximation works well for systems with mixed observation
provided that the necessary care is taken in the implementation of the method.

The paper is organized as follows. The model and the various versions of the Zakai
equation are described in Section~\ref{sec:model-and-Zakai-eqn}. In that section we
moreover introduce the basic form of the Galerkin approximation. Convergence results for
the Galerkin approximation are given in Section~\ref{sec:convergence}.
Section~\ref{sec:numerical-methods} deals with the numerical implementation of the model;
results from numerical experiments are finally reported in Section~\ref{Simulation
Results}.

\section{Zakai equation and  Galerkin approximation}
\label{sec:model-and-Zakai-eqn}

In this section we introduce the  nonlinear filtering problem  studied  in this
paper. Moreover, we present  different versions of the Zakai equation that describe the
solution of the filtering problem.  Finally we introduce  the Galerkin approximation for
the Zakai equation for the unnormalized conditional density and we derive an SDE system
for the Fourier coefficients.

\subsection{Model and notation}

We consider a filtered probability space $(\Omega, \mathcal{F},\mathbb{F}, \mathbb{P})$
where the filtration $\mathbb{F}=(\mathcal{F}_t)_{0 \le t \le T}$ satisfies the usual
conditions and where $T$ is an arbitrary but fixed horizon date. The nonlinear filtering
problem we study consists of an unobserved state process $X$ and observations $Z$ and
$N$. $Z$ is a nonlinear continuous transformation of $X$  with additional Gaussian noise,
while $N$ is a doubly stochastic Poisson process whose intensity is a nonlinear
function of $X$. 

\paragraph{The state process.}

We consider  an unobserved \emph{state process} $X$ on $\R^d$ which is the solution of
the SDE
\begin{align}\label{eq:state}
X_t=X_0+\int_0^t b(X_s)ds +\int_0^t \sigma(X_s)dV_s, \quad 0 \leq t \leq
T,
\end{align}
for a  $m$-dimensional $\mathbb{F}$-Brownian motion $V$. Moreover, we assume that $X_0$
has finite second moments and a density $p_0\in L^2(\R^d)$.   Set
$a(x)=\sigma(x)\sigma(x)^\top$. The components of $a(x)$ and $b(x)$ are denoted by
$a_{ij}(x)$ and $b_i(x)$, respectively.  The restriction of the generator $\Gene$ of the
Markov process $X$ to $C^2_b (\R^d)$, the set of all bounded and twice continuously
differentiable functions on $\R^d$, is given by the second order differential operator
\begin{align}\label{eq:A}
\Gene=\sum_{i=1}^d b_i(x) \frac{\partial }{\partial x_i}+\frac{1}{2}
\sum_{i,j=1}^d a_{ij}(x) \frac{\partial^2}{\partial x_i \partial x_j}.
\end{align}
Note that the It\^o-formula implies that for  $ f \in C^2_b(\R^d)$,
$M^f_t:=f(X_t)-f(X_0)-\int_0^t \Gene f(X_s)ds$ is an $\bbF$-martingale.

\paragraph{The observation processes.}
The observation is given by the two processes $Z$ and $N$. The process   $Z$
satisfies
\begin{align}\label{eq:Z} Z_t=\int_0^t h(X_s)ds +W_t, \quad 0\leq t
< \infty \,,
\end{align}
where  $h:\R^d\rightarrow \R^l$ is a measurable function and  $W$ is an $l$-dimensional
standard Brownian motion, independent of $X$. Moreover, the process $N$ is a doubly
stochastic Poisson process with intensity $\lambda(X_t)$ where $\lambda$ is a positive,
continuous and  bounded function, so that  the process $ N_t - \int_0^t \lambda(X_s) ds $
is an $\mathbb{F}$-martingale. We denote  the  jump times  of $N$ by $\tau_1,
\tau_2,\dots$.

The objective of nonlinear filtering is to find suitable ways for computing
$\pi_t(dx)$,  the conditional distribution of the state $X_t$ given the
observation history in a recursive way.   More formally, let $\cF^{Z,N}_{t} :=
\sigma( Z_u, N_u :0\leq u\leq t)$, so that the associated filtration
$\bbF^{Z,N}$ represents the information given by the observation. The
conditional distribution of $X_t$ given the observation until time $t$ is
determined by
\begin{align*}
\pi_t(f) := \E\big(f(X_t)|\cF^{Z,N}_{t} \big),\quad f \in L^{\infty}(\R^d).
\end{align*}

The following regularity assumptions on the data of the problem will be used throughout
the paper
\begin{description}
\item[(A1)] Assume that the following three conditions hold:
\begin{enumeratei}
  \item  $b:\R^d \rightarrow \R^d$, $\sigma: \R^d\rightarrow \R^{d\times m}$, and
  $h:\R^d\rightarrow \R^l$  are bounded on
  $ \R^d$.    Moreover, $b$ is ${C}^1$ with bounded derivatives and
  $\sigma$ is ${C}^2$ with bounded first and second order derivatives.
  \item   There exists
  $\alpha>0$, such that $z^{\top}a(x)z\geq \alpha z^{\top}z$, $\forall x,z \in
  \R^d$.
  \item $\lambda: \R^d \rightarrow [\varpi_1,\varpi_2]$
is a continuous
 function for constants  $0<\varpi_1<\varpi_2 $.
\end{enumeratei}
\end{description}

\begin{example} \label{ex:credit-risk}  Filtering problems
with diffusive and point process observations arise naturally in  credit risk modeling.
This connection was studied systematically  in \citeN{bib:frey-runggaldier-10} and
\citeN{bib:frey-schmidt-10}, among others. In these papers reduced-form portfolio credit
risk models are considered where default times are doubly stochastic random times with
intensity driven by some economic factor process $X$. In a large homogeneous portfolio
the number of default events is thus given by some doubly stochastic Poisson process $N$
with intensity $\lambda(X_t)$. In line with reality, it is assumed that investors cannot
observe the  process $X$ directly, but  are confined to noisy observations of $X$,
modelled by a process $Z$ as in \eqref{eq:Z}. Moreover, they obviously observe the
occurrence of default events and hence the process $N$.

In this context the pricing of credit derivatives naturally leads to a filtering problem,
as we now explain. In abstract terms a credit derivative with maturity $T$ can be
described in terms of some $\cF_T^N$-measurable payoff $H$. Denote by $\Q$ the  risk
neutral measure used for pricing. The price of the credit derivative at time $t \le  T$
is then given by $H_t = \E^\Q( H \mid \cF^{Z,N}_t)$ (assuming zero interest rates for
simplicity). Using iterated conditional expectations we get
$$H_t = \E^\Q \Big ( \E^\Q (H \mid \cF_t ) \mid \cF^{Z,N}_t\Big ) \,.$$
It is well-known that the pair $(X, N)$ is an $\bbF$-Markov process. Hence for typical
claims $H$ one has the equality  $\E^\Q (H | \cF_t ) = h(t,X_t,N_t)$ for a suitable
function $h$, and we get that $ H_t = \E^\Q \big ( h(t,X_t, N_t) | \cF_t^{Z,N}\big)$.
The computation of this conditional expectation is a nonlinear filtering problem of the
type considered in the present paper.

For further information on incomplete-information models in credit risk we refer to the
to the survey article \citeN{bib:frey-schmidt-11}.

\end{example}

\subsection{The measure-valued Zakai equation}

The evolution equation for the measure $\pi_t(dx)$ is usually deduced using a
change of measure method. Define
\begin{align*}
{\Lambda}_t: =\prod_{\tau_n \leq t} { \lambda(X_{\tau_n -})}
\cdot \exp\bigg( \int_0^t h(X_s)^{\top}dW_s
  +\frac{1}{2}\int_0^t \|h(X_s)\|^2ds -\int_0^{t} (\lambda(X_{s})-1)ds
  \bigg)
\end{align*} for $t\in [0,T]$. Then the regularity assumptions in \textbf{(A1)}
imply that $(\Lambda^{-1}_t)_{t \in [0,T]}$ is a nonnegative martingale. We
define the  measure $\P^0$ by its Radon-Nikodym derivative
  $ d\P^0= {\Lambda}_T^{-1} d\P$.
The Girsanov theorem yields that, under $\P^0$,   $Z$ is  a standard Brownian
motion, that $N$ is  a Poisson process with intensity equal to one, and that
$X$, $Z$ and $N$ are independent. Denote by $ Y_t := N_t-t$ the compensated
Poisson process, such that under $\P^0$, $Y$ is a martingale. Then the
conditional distribution $\pi_t(dx)$ has a representation in terms of an
associated unnormalized version $\rho$:   denoting by $\E^0$ the expectation
w.r.t. $\P^0$, we obtain by the abstract Bayes rule   for any $f\in
L^{\infty}(\R^d)$
\begin{align}\label{eq:Bayes}
  \pi_t(f)=\frac{\E^0(f(X_t)\Lambda_t|\mathcal{F}^{{Z,N}}_t)}{\E^0(\Lambda_t|\mathcal{F}^{{Z,N}}_t)}
  =:\frac{\rho_t(f)}{\rho_t(1)}. \end{align}
It is well-known that the measure-valued process $\rho_t$ satisfies the \emph{classical
Zakai equation}: let $\rho_0(f):=\E[f(X_0)|\cF_0^{Z,N}]$. Then,  for any $f\in
C^2_b(\R^d)$, $t \in [0, T]$,
\begin{align} \label{eq:Zakaimv}
 \rho_{t}(f)= \rho_0(f) +\int_0^t \rho_{s}(\Gene f)ds
+\int_0^t \rho_{s}(fh^{\top})dZ_s  +\int_0^t
\rho_{s-}\Big(f(\lambda-1)\Big)dY_s,
\end{align}
$\P^0-a.s.$, see for instance Theorem~3.24 in \citeN{bib:bain-crisan-08} (only continuous
observations). A formal proof that under {\bf(A1)}, \eqref{eq:Zakaimv} holds in the setup
of the present paper is given in \citeN{bib:xu-thesis-10}, Theorem~2.9.

\subsection{The Zakai equation for the conditional density}

Our aim is to determine the dynamics of the Lebesgue-density of the unnormalized
conditional distribution $\rho_t(dx)$. Consider the separable Hilbert space $H
=L^2(\R^d)$ with norm $\|\cdot\|_H$ and scalar product $(\cdot,\cdot)$. To obtain
intuition, suppose that
$$ \rho_t(f) = (q_t,f) $$
for all $f\in C^2(\R^d)$ with compact support  and for some $H$-valued process $q =
(q_t)_{0 \le t \le T}$ such that $q_t(\cdot)$ can be identified with a smooth function.
Denote by the differential operator $\ccL^*$ the formal adjoint of the generator $\ccL$.
As $(q_t,\ccL f)=(\ccL^*q_t,f)$ the measure valued equation \eqref{eq:Zakaimv} simplifies
to
\begin{align}\label{eq:Zakai weak}
(q_t,f) &= (q_0,f) + \int_0^t (\ccL^* q_s,f) ds + \int_0^t (h^\top q_s,f) dZ_s + \int_0^t ((\lambda-1)q_{s-},f) dY_s.
\end{align}
This suggests that $q$ solves the stochastic partial differential equation (SPDE)
\begin{align*}
dq_t = \ccL^* q_t dt + h^\top q_t dZ_t + (\lambda-1)q_{t-} dY_t
\end{align*}
in an appropriate sense. The next step is to give this equation a precise mathematical
meaning using the theory for mild and weak solutions for SPDEs as in
\citeN{bib:peszat-zabcyk-07}. Besides the Hilbert space $H=L^2(\R^d)$ we consider the
Sobolev space $V=H^1(\R^d)\subset H$. We define an extension $\cA^*$ of $\ccL^*$ with
domain $D(\cA^*)\subset V$ as follows: $u\in V$ is an element of $D(\cA^*)$ if there
exists $f\in H$ such that for all $v \in V$
$$ -\frac{1}{2}\sum_{i,j=1}^d
\int_{\R^d} a_{ij}(x) \frac{\partial u}{\partial x_i} \frac{\partial v}{\partial x_j} \,
dx+\sum_{i=1}^d \int_{\R^d} \Big(b_i-\frac{1}{2}\sum_{j=1}^d  \frac{\partial
a_{ij}(x)}{\partial x_j} \Big) \frac{\partial v}{\partial x_i}u \,dx = (f,v),$$ and  we
set $\cA^*u=f$ in that case. If $u\in C_0^2(\R^d)$, we obtain that $f=\ccL^*u$ by
checking that $(f,v) = (u,\ccL v)$ with integration by parts. It is well-known that
$\cA^*$ generates an analytic $C_0$-semigroup $G^*$, see \citeN[Proposition A.10]{bib:da-prato-zabcyk-92}. (A $C_0$-semigroup $G^*$ is  a map from $[0,T]$ into $L(H,H)$
such that $G^*(0) = \text{id}$, $G^*(t+s) = G^*(t) G^*(s)$ and so that $G^*$ is continuous in the strong operator topology.)

\paragraph{Mild and weak solutions.}   Let $\Nspace$  denote the set of all
$\mathbb{F}^{Z,N}$-adapted, ${H}$-valued processes $\xi = (\xi_t)_{0\leq t \leq T}$,
continuous in the mean square norm, which are such that
\begin{align}\label{eq:norm}
| \xi|_T:=\bigg ( \sup_{t\in [0,T]}{\E}^0\Big (\|\xi(t) \|_{{H}}^2\Big ) \bigg)^{1/2}< \infty.
\end{align}
It is well-known that  $\Nspace $ is a Banach space with norm $|\cdot|_T$, see
\citeN{bib:germani-piccioni-87}.

Define the multiplication-operators $\cB \colon H \to H^l$, $\cB f := f h^\top$ and $\cC \colon H \to H$, $\cC f :=(\lambda-1)f$. A \emph{mild solution}  of the SPDE
\begin{align}\label{eq:zakai-density}
dq_t &= \cA^* q_t dt + \cB q_t dZ_t + \cC q_{t-} d Y_t.
\end{align}
 is a process $q \in \Nspace$ such that
\begin{align}\label{eq:zakai-Hilbert-mild}
q_t=G^*_t q_0+\int_0^t  G^*_{t-s} \cB q_{s}
dZ_s+\int_0^t G^*_{t-s} \cC q_{s-}\,
dY_s\,, \quad t \le T.
\end{align}
Denote by $\cA:=(\cA^*)^*$ the adjoint operator of $\cA^*$ and note that on $C^2_0(\R^d)$
the operator  $\cA$ coincides with the generator $\ccL$ of $X$. A \emph{weak solution} of the
SPDE \eqref{eq:zakai-density} is a process $q \in \Nspace$ such that for all $v \in
D(\cA)$
\begin{align}\label{eq:zakai-Hilbert-weak}
(q_t,v) &= (q_0,v) + \int_0^t (q_s, \cA v) \, ds + \int_0^t (q_s, \cB v) d Z_s + \int_0^t (q_{s-}, \cC v) d Y_s, \quad t \le T\,.
\end{align}
In our context $q$ is a weak solution of \eqref{eq:zakai-density} if and only if it is a
mild solution of that equation; this follows immediately from Theorem~9.15 in
\citeN{bib:peszat-zabcyk-07}.

\paragraph{The Zakai equation.} The following result describes the evolution of the density of the unnormalized
conditional distribution $\rho_t(dx)$.
\begin{theorem}\label{thm:density-zakai}   Assume that {\bf(A1)} holds.
Then for all $q_0 \in V$ there is a unique mild solution $q$  of the SPDE
\eqref{eq:zakai-density}. Moreover, $q_t \in H^1(\R^d)$ and for all $f \in L^2(\R^d)$ we
have that $ \rho_t(f) = (q_t,f)$.
\end{theorem}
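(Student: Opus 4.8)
The plan is to prove the three assertions in turn: existence and uniqueness of a mild solution in $\Nspace$, the spatial regularity $q_t\in V$, and finally the identification $\rho_t(f)=(q_t,f)$. For existence and uniqueness I would run a Banach fixed-point argument in the Banach space $\Nspace$. Writing
\[
(\Phi\xi)_t := G^*_t q_0 + \int_0^t G^*_{t-s}\cB\xi_s\,dZ_s + \int_0^t G^*_{t-s}\cC\xi_{s-}\,dY_s ,
\]
a mild solution is precisely a fixed point of $\Phi$. The structural facts I rely on are that $G^*$ is a bounded $C_0$-semigroup, so that $\sup_{r\le T}\|G^*_r\|_{L(H)}=:C_T<\infty$, and that assumption \textbf{(A1)} makes the multiplication operators bounded, with $\|\cB\|\le\|h\|_\infty$ and $\|\cC\|\le\|\lambda-1\|_\infty$. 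Under $\P^0$ the drivers $Z$ and $Y$ are independent martingales --- $Z$ an $l$-dimensional Brownian motion and $Y=N-t$ a compensated Poisson process with predictable quadratic variation $ds$ --- so the It\^o isometry for $H$-valued integrals gives
\[
\E^0\Big\|\int_0^t G^*_{t-s}\cB\xi_s\,dZ_s\Big\|_H^2 \le C_T^2\|h\|_\infty^2\int_0^t\E^0\|\xi_s\|_H^2\,ds ,
\]
and an analogous estimate for the $Y$-integral. These combine to $|\Phi\xi-\Phi\eta|_T^2\le K\,T\,|\xi-\eta|_T^2$ with $K$ depending only on $C_T,\|h\|_\infty,\|\lambda-1\|_\infty$, so $\Phi$ is a contraction on a short interval $[0,T_0]$ with $KT_0<1$; since $T_0$ does not depend on the starting value the solution is prolonged to all of $[0,T]$ by concatenation. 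Along the way one verifies that $\Phi$ maps $\Nspace$ into itself, i.e.\ that the three terms are mean-square continuous and adapted to $\bbF^{Z,N}$. Because the coefficients are linear and globally Lipschitz this step is essentially routine.

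The regularity $q_t\in V=H^1(\R^d)$ is the delicate part, and it is here that the jump term forces a genuine extension of the arguments in \citeN{bib:germani-piccioni-87}. I would exploit the analyticity of $G^*$: the fractional powers $(\cA^*)^\theta$ are well defined, the smoothing estimate $\|(\cA^*)^\theta G^*_r\|_{L(H)}\le C_\theta r^{-\theta}$ holds for $r>0$ (see \citeN{bib:da-prato-zabcyk-92}), and under the uniform ellipticity \textbf{(A1)}(ii) the domain $D((\cA^*)^{1/2})$ coincides with $V$ with equivalent norms. The deterministic term $G^*_tq_0$ lies in $V$ since $q_0\in V$. For the stochastic convolutions the naive application of the isometry produces the non-integrable kernel $(t-s)^{-1}$, so instead I would use the factorization method: inserting the identity built from $\int_s^t(t-r)^{\alpha-1}(r-s)^{-\alpha}G^*_{t-r}G^*_{r-s}\,dr$ with $0<\alpha<1/2$ rewrites each stochastic convolution as an ordinary time-convolution of a square-integrable $H$-valued process, after which the smoothing bound can be used to deposit the fractional derivative. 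The Brownian convolution is treated as in the diffusive case; the novelty is the jump convolution, where I would invoke the corresponding isometry for the compensated Poisson integral under $\P^0$. Together with the G\aa rding inequality coming from \textbf{(A1)}(ii) this yields $q_t\in V$. I expect the bookkeeping of the singular kernels in the Poisson-driven convolution to be the main obstacle of the whole proof.

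For the identification $\rho_t(f)=(q_t,f)$ I would first use, as already noted in the excerpt, that by Theorem~9.15 of \citeN{bib:peszat-zabcyk-07} the mild solution is also a weak solution, so that \eqref{eq:zakai-Hilbert-weak} holds for every $v\in D(\cA)$. Restricting to test functions $v=f\in C^2_0(\R^d)\subset D(\cA)$, on which $\cA f=\ccL f$ while $\cB$ and $\cC$ act by multiplication, the pairings satisfy $(q_s,\cA f)=(q_s,\ccL f)$, $(q_s,\cB f)=(q_s,fh^\top)$ and $(q_{s-},\cC f)=(q_{s-},f(\lambda-1))$; hence $t\mapsto(q_t,f)$ obeys exactly the measure-valued Zakai equation \eqref{eq:Zakaimv}, with matching initial value $(q_0,f)=\rho_0(f)$. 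Since the true unnormalized filter $\rho$ of \eqref{eq:Bayes} also solves \eqref{eq:Zakaimv}, the identity $(q_t,f)=\rho_t(f)$ for $f\in C^2_0(\R^d)$ follows from uniqueness of solutions of \eqref{eq:Zakaimv} under \textbf{(A1)}, and extends to all $f\in L^2(\R^d)$ by density. The one point that needs care here is to invoke, or establish by the usual duality and filtered-martingale-problem argument, the uniqueness of the measure-valued Zakai equation.
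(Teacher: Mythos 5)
You should first note that the paper does not actually prove Theorem~\ref{thm:density-zakai}: it is quoted from \citeN{bib:pardoux-79}, \citeN{bib:pardoux-79b} and \citeN{bib:germani-piccioni-87}, with the mixed-observation case deferred to \citeN{bib:xu-thesis-10}. The only part with an in-paper counterpart is existence and uniqueness of the mild solution: by Lemma~\ref{lem:Lbounds} and Lemma~\ref{thm:zakai-abstract-eu} the Volterra operator $L$ of \eqref{eq:conti-L} is quasinilpotent, so $(I-L)^{-1}=\sum_{i\ge 0}L^i$ exists on all of $[0,T]$ at once and no short-interval contraction or concatenation is needed. Your fixed-point argument for that part is correct and essentially equivalent, just less direct.

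The genuine gap is in your regularity step. The factorization method cannot reach $V=D((\cA^*)^{1/2})$: square integrability of the inner factor $Y_\alpha(s)=\int_0^s (s-r)^{-\alpha}G^*_{s-r}\cB q_r\,dZ_r$ forces $\alpha<1/2$ (the It\^o isometry produces the kernel $(s-r)^{-2\alpha}$, which must be integrable), while depositing the half derivative on the outer integral $\int_0^t (t-s)^{\alpha-1}G^*_{t-s}Y_\alpha(s)\,ds$ via $\|(\cA^*)^{1/2}G^*_r\|\le C r^{-1/2}$ requires $\int_0^t (t-s)^{\alpha-1-1/2}\,ds<\infty$, i.e.\ $\alpha>1/2$. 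These are incompatible; factorization only yields $q_t\in D((\cA^*)^\theta)$ for $\theta<1/2$, strictly short of $H^1$, and the same obstruction appears for the Poisson convolution. (Separately, the identification $D((\cA^*)^{1/2})=H^1$ for a non-self-adjoint divergence-form operator is the Kato square root problem and does not follow from ellipticity alone.) The proofs this theorem is quoted from proceed instead variationally, in the Gelfand triple $V\subset H\subset V'$: coercivity (G\aa rding) from {\bf(A1)}(ii) together with the It\^o formula for $\|q_t\|_H^2$ gives an energy estimate of the form $\E^0\|q_t\|_H^2+c\,\E^0\int_0^t\|q_s\|_V^2\,ds\le \|q_0\|_H^2+C\,\E^0\int_0^t\|q_s\|_H^2\,ds$, whence $q\in L^2(\Omega\times[0,T];V)$, and the variational solution is then identified with the mild one. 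Finally, your identification step inverts the usual logic: uniqueness for the measure-valued Zakai equation \eqref{eq:Zakaimv} with mixed observations is itself a substantial theorem (of filtered-martingale-problem type) that is typically deduced from, not used for, the density result; the standard route is to show directly that the Bayes-formula process $\rho$ of \eqref{eq:Bayes} admits an $H$-valued density which is a weak, hence mild, solution of \eqref{eq:zakai-density}, and then to invoke the uniqueness of mild solutions that you have already established.
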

In view of this result, equation \eqref{eq:zakai-density}  will be called the Zakai
equation for the unnormalized conditional density.

Theorem~\ref{thm:density-zakai} has been obtained  in \citeN{bib:pardoux-79} and in
\citeN{bib:germani-piccioni-87}  for the case of pure diffusion information and in
\citeN{bib:pardoux-79b} for the pure Poisson case ($h \equiv 0$). The extension to the
case of mixed observations may be found in \citeN{bib:xu-thesis-10}.

\subsection{The Galerkin approximation}\label{subsec:Galerkin approximation}

The Galerkin approximation for a (stochastic) PDE essentially projects the equation to a
finite-dimensional subspace.   In the case of  the  Zakai equation for the unnormalized
conditional density  the solution of the projected equation can be characterized in terms
of a finite-dimensional system of ordinary stochastic differential equations (SDEs), as
we now explain.

Formally the Galerkin approximation is defined as follows: Let ${\{e_1,e_2,\ldots
\}}\subset D(\mathcal{A}^*)\cap D(\cA)$ be a basis of the Hilbert-space $H$. Let $H_n$ be
the linear subspace  spanned by $\{e_1, \ldots, e_{n}\}$ and denote by $P_n$ the
projection from $H$ to $H_n$. We define the projection of the operator  $\cA^*$ by
$$(\mathcal{A}^*)^{(n)}:=P_n \mathcal{A}^* P_n\, ; $$
the operators $\cB^{(n)}$ and $\cC^{(n)}$ are defined analogously.
\begin{definition}\label{def:galerkin}The $n$-dimensional \emph{Galerkin approximation} of  \eqref{eq:zakai-density} is
the solution of
\begin{align}\label{eq:def-galerkin}
\begin{split}
dq_t^{(n)} &= (\cA^*)^{(n)} q_t^{(n)} dt + \cB^{(n)} q_t^{(n)} dZ_t + \cC^{(n)} q_{t-}^{(n)} dY_t, \\
q_0^{(n)} &= P_n q_0.
\end{split}
\end{align}
\end{definition}
As previously, there are two equivalent concepts of solutions. The mild solution of
\eqref{eq:def-galerkin} is obtained with $(G^*)^{(n)} := \exp(\cA^*)^{(n)}$. On the other
side, the weak form is obtained using the adjoint operator $\cA^{(n)} := ((\cA^*)^{(n)})^*$. Since for $u,v \in H$ one has $(P_n \cA^* P_n u,v) = (u,P_n
\cA P_n v)$ the weak form of the Galerkin approximation \eqref{eq:def-galerkin} becomes
\begin{align}\label{eq:Galerkin-approx-weak}
d(q_t^{(n)},v) &= (q_t^{(n)}, P_n \cA P_n v) dt + (q_t, P_n \cB P_n v) dZ_t + (q_{t-},P_n \cC P_n v) dY_t\,,\quad v \in H.
\end{align}

Note that for $v \in H_n^\bot$ we obtain that the differential  $d(q_t^{(n)},v)$ is equal to zero. Since  moreover $q_0^{(n)} =
P_n q_0 \in H_n$ it follows that $q_t^{(n)} \in H_n$ for $t \in [0,T]$ $\P$-a.s. Hence,
$q_t^{(n)}$ can be written as
\begin{align}\label{eq:approximation-qn}
q_t^{(n)} (x) = \sum_{i=1}^n \psi_i^{(n)} (t) e_i(x), \qquad t \in [0,T],
\end{align}
where $\psi_i^{(n)}$, $1 \le i \le n$   are called  \emph{Fourier coefficients}. Plugging
\eqref{eq:approximation-qn} into the weak form of the Galerkin
approximation~\eqref{eq:Galerkin-approx-weak}, we get that the Fourier coefficients
satisfy the following system of ordinary SDEs:
\begin{align*}
\sum_{i=1}^{n} (e_i, e_j) d\psi_i^{(n)}(t)&=
\Big(\sum_{i=1}^{n} \psi_i^{(n)}(t) (e_i, \cA e_j) \Big)dt
+\sum_{\ell = 1}^l \Big(\sum_{i=1}^{n} \psi_i^{(n)}(t) (e_i,h^{\ell} e_j)\Big)dZ^\ell_t\\
&+\Big(\sum_{i=1}^{n} \psi_i^{(n)}(t-) \Big(e_i,(\lambda-1)e_j\Big)\Big) dY_t.
\end{align*}
Define the $n \times n$ matrices $A,C, D $ and $B^\ell,\ell=1,\dots,l$ by their components:
\begin{align}\label{eq:GC-a}
 a_{ji}:= (e_i, \cA e_j), \, b^\ell_{ji}:=(e_i, h^\ell e_j),\,
c_{ji}:=(e_i,(\lambda-1) e_j), d_{ji}:=( e_i, e_j )\,.
\end{align}
As ${\{ e_1,e_2,\dots \}}$ is a basis of $H$, the matrix $D$
has full rank and is invertible. Using matrix notation we  obtain the following SDE
system for the vector-valued process  $\U^{(n)}:=(\psi^{(n)}_1,\ldots \psi^{(n)}_{n})^{\top}$,
\begin{equation}
\label{eq:Upsilon-e}
\begin{split}
d \Upsilon^{(n)}_t & =D^{-1}\Big( A\Upsilon^{(n)}_t dt + \sum_{\ell=1}^l B^\ell\Upsilon^{(n)}_t dZ^\ell_t
                     +  C \Upsilon^{(n)}_{t-}dY_t\Big), \\
  \Upsilon^{(n)}_0 &= D^{-1}{q}^{(n)}_0\,.
\end{split}
\end{equation}
This SDE system will be the starting point for our numerical analysis in
Section~\ref{sec:numerical-methods}. Note that for $\{e_1, e_2, \dots\}$  smooth, one has
$a_{ji} = ( e_i,  \Gene e_j )$ which is more convenient for computing the coefficients of
the system \eqref{eq:Upsilon-e}. For the case without point-process observation the SDE-system~\ref{eq:Upsilon-e} was already proposed by \citeN{bib:ahmed-radaideh-97}.

\paragraph{Moments of the conditional distribution.}
Obviously, the (normalized) conditional density of $\pi_t(dx)$  can be approximated via
\begin{align} \label{eq:aga-density}
p_{t}:=\frac{q_{t}}{\int_{\R^d} q_{t}(x)dx } \approx \frac{q_{t}^{(n)}}{\int_{\R^d} q_{t}^{(n)}(x)dx } =: p_t^{(n)} \,;
\end{align}
here $\approx$ means that we approximate the term on the left side by the Galerkin
approximation on the right side. In this case we have that $\E(f(X_{t})|\cF^{Z,N}_{t})
\approx (p_{t}^{(n)},f)$. On the other side, we can represent some characteristics of the
conditional distribution directly via $q_t$. Consider for simplicity the case $d=1$.
Denote by $\hat{x}_t$ and   $\hat \sigma_t^2$ be the conditional mean and  variance of the state
process at time $t\in[0,T]$. Then
\begin{align}\label{eq:CM-Galerkin}
\hat{x}_t=\E(X_t|\cF_t^{Z,N})=\frac{\int xq_t(x)dx}{\int
q_t(x)dx}\approx \frac{\int xq^{(n)}_t(x)dx}{\int q^{(n)}_t(x)dx}
=\frac{\sum_{i=1}^{n}\psi_i^{(n)}(t)(x,e_i) }
  {\sum_{i=1}^{n}\psi_i^{(n)}(t)({1},e_i)}.
\end{align}
Note that the second equality follows from the definition of the unnormalized
distribution, see  \eqref{eq:Bayes}. For the last equality we used
\eqref{eq:approximation-qn}. In a similar way we approximate in $\hat \sigma_t^2=
\E\big((X_t-\hat{x}_t)^2|\cF_t^{Z,N}\big) = \E(X_t^2|\cF_t^{Z,N})-(\hat{x}_t)^2$ the
conditional second moment by
\begin{align}\label{eq:CV-Galerkin}
\E(X_t^2|\cF_t^{Z,N})
\approx \frac{\sum_{i=1}^{n}\psi_i^{(n)}(t)(x^2,e_i) }
  {\sum_{i=1}^{n}\psi_i^{(n)}(t)({1},e_i)}.
\end{align}
Analogously all  moments of the conditional distribution can be represented by the
Fourier coefficients. Notice that $({1},e_i) $, $(x,e_i)$ and $(x^2,e_i)$ are independent
of the observation and can be computed off-line (we implicitly assume that these
integrals exist for the chosen basis functions).

\section{Convergence results} \label{sec:convergence}

This section gives sufficient conditions for the convergence of the Galerkin
approximation $q^{(n)}$ defined in \eqref{eq:approximation-qn} to the solution
of the Zakai equation $q$ from \eqref{eq:zakai-density} in an appropriate
sense. The following theorem is the main theoretical result of the paper:

\begin{theorem}\label{Thm:convergence_linear_part}Assume that {\bf(A1)} holds.
Let $q$ be the  solution of the Zakai equation in
\eqref{eq:zakai-density} and $q^{(n)}$ be the
corresponding Galerkin approximation.  Then,  for any $q_0\in V$,
\begin{align*}
\sup_{t\in[0,T]}\E^0 (\|q^{(n)}_t-q_t\|_H^2)\rightarrow 0, \quad
\text{as}\quad n\rightarrow \infty,
\end{align*}
  if and only if,  for any $x\in H$,
  \begin{align}\label{eq:conditions_convergence}
  \lim_{n\rightarrow
\infty}\sup_{t\in [0,T]} \Big\| \big(\exp(P_n \mathcal{A}^* P_n t)-G_t^*\big)x
\Big\|_H=0.
\end{align}
\end{theorem}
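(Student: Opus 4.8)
The plan is to work entirely with the mild formulations \eqref{eq:zakai-Hilbert-mild} and its Galerkin analogue, exploiting that the two stochastic coefficients $\cB$ and $\cC$ are \emph{bounded} operators (by (A1)(i) and (A1)(iii)). Writing $G^{*,n}_t := \exp(P_n\cA^* P_n t)$, the mild solution of \eqref{eq:def-galerkin} reads $q^{(n)}_t = G^{*,n}_t P_n q_0 + \int_0^t G^{*,n}_{t-s}\cB^{(n)} q^{(n)}_s\,dZ_s + \int_0^t G^{*,n}_{t-s}\cC^{(n)} q^{(n)}_{s-}\,dY_s$. Because $\cB,\cC$ are bounded, I expect the whole statement to be driven by the deterministic convergence \eqref{eq:conditions_convergence}: once the free-evolution term is controlled, the stochastic terms are handled by an It\^o isometry and Gr\"onwall. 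Before either direction I would establish the one structural estimate everything rests on, a uniform bound $\sup_n\sup_{t\le T}\|G^{*,n}_t\|\le M$. For $x\in H_n^\perp$ one has $P_n\cA^* P_n x=0$, hence $G^{*,n}_t x=x$; for $y(t)=G^{*,n}_t u$ with $u\in H_n$ one computes $\tfrac{d}{dt}\|y(t)\|_H^2=2(\cA^* y(t),y(t))$, and the G{\aa}rding inequality $(\cA^* v,v)\le\omega\|v\|_H^2$ — which follows from the ellipticity (A1)(ii) together with boundedness of $b$, $a$, $\nabla a$ in (A1)(i) via Young's inequality — yields $\|G^{*,n}_t u\|_H\le e^{\omega t}\|u\|_H$. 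Splitting $x$ along $H_n\oplus H_n^\perp$ then gives a bound $M:=e^{\max(\omega,0)T}$, uniform in $n$ and $t\le T$.

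For the ``only if'' direction I would take $\E^0$ in the two mild equations. Since under $\P^0$ both $Z$ and $Y$ are martingales and the integrands are $\cF^{Z,N}$-adapted and square integrable (as $q,q^{(n)}\in\Nspace$ and $\cB,\cC,G^*$ are bounded), the stochastic integrals have zero mean, so $\E^0 q_t=G^*_t q_0$ and $\E^0 q^{(n)}_t=G^{*,n}_t P_n q_0$ for each fixed $t$ (here $q_0\in V$ is taken deterministic, which is permitted). Jensen's inequality then gives $\|G^{*,n}_t P_n q_0-G^*_t q_0\|_H=\|\E^0(q^{(n)}_t-q_t)\|_H\le(\E^0\|q^{(n)}_t-q_t\|_H^2)^{1/2}$, whose supremum over $t$ tends to $0$ by hypothesis. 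Using $G^{*,n}_t q_0=G^{*,n}_t P_n q_0+(I-P_n)q_0$ and $\|(I-P_n)q_0\|_H\to0$ (basis property), I obtain \eqref{eq:conditions_convergence} first for $q_0\in V$, and then for every $x\in H$ by density of $V$ in $H$ together with the uniform bound $M$, via a standard $\varepsilon/3$ argument.

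For the ``if'' direction, set $\epsilon^{(n)}_t:=\E^0\|q^{(n)}_t-q_t\|_H^2$ (finite and uniformly bounded, both processes lying in $\Nspace$) and subtract the two mild representations. The free term $G^{*,n}_t P_n q_0-G^*_t q_0$ tends to $0$ uniformly in $t$ exactly as above, now using the hypothesis. For each stochastic integral I would apply the It\^o isometry under $\P^0$ (with $\langle Z\rangle_t=t$ componentwise and $\langle Y\rangle_t=t$) and split the integrand, e.g. for the $Z$-part, as $G^{*,n}_{t-s}\cB^{(n)}(q^{(n)}_s-q_s)+(G^{*,n}_{t-s}\cB^{(n)}-G^*_{t-s}\cB)q_s$. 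The first, ``recursive'' piece is bounded by $M^2\|\cB\|^2\epsilon^{(n)}_s$ using $M$ and $\|\cB^{(n)}\|\le\|\cB\|$, and the $\cC$-piece by $M^2\|\cC\|^2\epsilon^{(n)}_s$. The second, ``source'' piece is the crux: for fixed $x$ one has $\sup_r\|(G^{*,n}_r\cB^{(n)}-G^*_r\cB)x\|_H\to0$, since $\cB^{(n)}x\to\cB x$ (strong convergence of $P_n$) and $\sup_r\|(G^{*,n}_r-G^*_r)\cB x\|_H\to0$ by \eqref{eq:conditions_convergence}. As these operators are uniformly bounded by $2M\|\cB\|$, dominated convergence (dominating function $(2M\|\cB\|)^2\|q_s\|_H^2$, integrable because $\sup_s\E^0\|q_s\|_H^2\le|q|_T^2$) gives $\int_0^t\E^0\|(G^{*,n}_{t-s}\cB^{(n)}-G^*_{t-s}\cB)q_s\|_H^2\,ds\to0$ uniformly in $t\le T$. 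Collecting terms yields $\epsilon^{(n)}_t\le\eta^{(n)}+L\int_0^t\epsilon^{(n)}_s\,ds$ with $L=6M^2(\|\cB\|^2+\|\cC\|^2)$ and $\eta^{(n)}\to0$ uniformly, so Gr\"onwall gives $\sup_{t\le T}\epsilon^{(n)}_t\le\eta^{(n)}e^{LT}\to0$.

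The two genuine obstacles are, first, the uniform bound $M$ — the $\varepsilon/3$ argument, the recursive estimate and the dominating function all depend on it, and it is precisely where (A1) enters through the G{\aa}rding inequality; and second, promoting the pointwise-in-$x$, uniform-in-$r$ convergence of the source operators to convergence of the $\P^0$-expected integral against the \emph{random} integrand $q_s$. I would resolve the latter by the dominated-convergence argument above rather than by any uniform operator-norm convergence, which is false in general.
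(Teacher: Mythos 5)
Your proof is correct, but it takes a genuinely different route from the paper's. The paper packages the sufficiency direction into an abstract continuity theorem (Proposition~\ref{thm:Continuity Theorem}) for the solution map $F(f,B,C,S)$ of a generic linear SPDE: the mild equation is written as $\xi=\xi^{[0]}+L\xi$ with a Volterra-type operator $L$, the quasinilpotent estimate of Lemma~\ref{lem:Lbounds} gives a bound on $(I-L^{(n)})^{-1}$ uniform in $n$ (Lemma~\ref{thm:zakai-abstract-eu}), and the difference of solutions is expressed as $(I-L^{(n)})^{-1}\big((\xi^{[0,(n)]}-\xi^{[0]})+(L^{(n)}-L)\xi\big)$. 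Your Gr\"onwall inequality plays exactly the role of the uniform bound on $(I-L^{(n)})^{-1}$, and your treatment of the ``source'' term --- It\^o isometry plus dominated convergence against the random integrand $q_s$, with the uniform operator bound supplying the dominating function --- is essentially the same as the paper's handling of $(L^{(n)}-L)\xi$. Two differences are more substantive. First, the paper gets uniform boundedness of the approximating semigroups from the uniform boundedness principle, which is only available once the strong convergence \eqref{eq:conditions_convergence} is \emph{assumed}; your G{\aa}rding-inequality bound $M=e^{\max(\omega,0)T}$ is unconditional, derived directly from {\bf(A1)}. Second --- and this is where that matters --- the paper does not prove the necessity direction at all, referring instead to Theorem~6.1 of \citeN{bib:germani-piccioni-87}, whereas you give a complete elementary proof: taking $\P^0$-expectations in the two mild equations (the stochastic integrals being mean-zero square-integrable martingales under $\P^0$) reduces the statement to deterministic semigroup approximation for $q_0\in V$, and the $\varepsilon/3$ density step extending this to all $x\in H$ genuinely requires your unconditional bound $M$, since the uniform boundedness principle cannot be invoked from pointwise convergence on the dense subspace $V$ alone. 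In short, the paper's route buys a reusable, more general continuity result (joint continuity of $F$ in all four coefficients), while yours buys self-containedness --- including the half of the equivalence the paper outsources --- and makes explicit where each item of {\bf(A1)} enters.
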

Note that $G_t^*x$ is the solution of the Kolmogorov forward PDE with initial
condition $x$ (the PDE describing the evolution of the transition density of
$X$) and $\exp(P_n \mathcal{A}^* P_n t) x$ is the Galerkin approximation to
this (deterministic) PDE. Hence Theorem~\ref{Thm:convergence_linear_part}
shows that the Galerkin approximation for the Zakai equation converges if and
only if the Galerkin approximation for the deterministic forward equation
converges.

Necessary and sufficient conditions for \eqref{eq:conditions_convergence} to hold can be
obtained by means of the Trotter-Kato theorem. A convenient condition that ensures
\eqref{eq:conditions_convergence} under \textbf{(A1)} is that
\begin{align}\label{eq:convenientcondition}
\bigcup_{n \in \N} H_n \text{  is dense in } V;
\end{align}
see Theorem 4, \citeN{bib:germani-piccioni-84}.

\subsection*{Proof of Theorem \ref{Thm:convergence_linear_part}}

The remainder of this section is devoted to the proof  of Theorem
\ref{Thm:convergence_linear_part}. The essential part of the proof is a continuity result
for the mild form of the  Zakai equation, see Proposition \ref{thm:Continuity Theorem}
below. This result is an extension of a result from \citeN{bib:germani-piccioni-87} where
the case of  continuous observation is treated. We recall the mild form of the Zakai
equation in the Banach  space $\Nspace $, $ q_t=G^*_t q_0+\int_0^t G^*_{t-s} \cB q_{s}
dZ_s+\int_0^t G^*_{t-s} \cC q_{s-}, dY_s $, $t \le T$ where for $f \in H$, $\cB f =
h^\top f$ and $\cC f=(\lambda-1)f$.

We start  by introducing some necessary operator spaces. By $\mathcal{S}$ we
denote the space of all $C_0$-semigroups of linear bounded operators  from $H$
to $H$ such that there exists $\bar{S} \in \R^+$ with
 \begin{align}\label{eq:bounded-S}
\sup_{t\in [0,T]}\|S_t \|\leq \bar{S} \; \text{ for all $S\in \mathcal{S}$}.
\end{align}
We endow $\mathcal{S}$ with the topology of  uniform strong convergence on
$[0,T]$, i.e.~a sequence  $(S^{(n)})$ in $\mathcal{S}$ converges to $S\in
\mathcal{S}$ if for all $x\in H$
\begin{align*}
\lim_{ n \rightarrow \infty}\sup_{t\in [0,T]} \Big\|
(S^{(n)}_t-S_t)x \Big\|_H=0.
\end{align*}

For any $l \in \N$ denote by $\mathcal{U}^l$   the space of linear bounded
operators from $H$ to $H^l$ ($l$-fold product of $H$). In the special case
$l=1$ we  write $\mathcal{U}=\mathcal{U}^1$. An operator $A \in \mathcal{U}^l$
can be written  component-wise: for all $x \in H$,
\begin{align*}
A x=(A^{1}x,\ldots,A^{l}x)^\top
\end{align*}
with $A^{i} \in \mathcal{U}$. The space $\mathcal{U}^l$ is endowed with the strong topology, that is
a sequence $(A^{(n)})$ in $\mathcal{U}^l$ converges to $A\in \mathcal{U}^l$, if for all $x\in H$
\begin{align*}
\lim_{n \rightarrow \infty}  \Big\| (A^{(n)}-A) x \Big\|_{H^l}=0.
\end{align*}

\paragraph{The studied SPDEs.} For the proof we study a more general class of
linear stochastic partial differential equations that includes  the Zakai
equation \eqref{eq:zakai-Hilbert-mild} as a special case. Consider a generic
semigroup $S \in \mathcal{S}$ and generic linear operators $B \in \mathcal{U}^l$,
$C\in \mathcal{U}$ and some  $f \in H$. In the sequel we study the following
equation in $\Nspace$:
\begin{align}\label{eq:zakai-Hilbert} \xi_t=S_t f+\int_0^t
S_{t-s} B \xi_{s} d Z_s +\int_0^t S_{t-s} C
\xi_{s-} dY_s, \quad t\in[0,T].
\end{align}

The following decomposition  of this equation  is the starting point  for our
analysis: define the   linear operator $L$ on $\Nspace$  by
\begin{align}\label{eq:conti-L}
(L\xi)(t):=\int_0^t
S_{t-s} B \xi_{s} d Z_s +\int_0^t S_{t-s} C
\xi_{s-} dY_s
\end{align}
for all $t \in [0,T]$ and $\xi \in H$. Furthermore, set  $ \xi^{[0]}_t:=S_t f$
such that $\xi^{[0]}\in \Nspace$.
We obtain that  \eqref{eq:zakai-Hilbert} can be
rewritten as the following equation in $\Nspace $
\begin{align}\label{eq:zakai-Hilbert_abstract}
\xi=\xi^{[0]}+L\xi.
\end{align}
The operator $L$ is a bounded linear operator and it is moreover
quasinilpotent, as the following estimate shows.
\begin{lemma}\label{lem:Lbounds}
Set  $\gamma:= \sqrt{T}\bar{S}(\|B\|^2+\|C\|^2)^{\frac{1}{2}}$. Then, for all  $n\in\N$
\begin{align}\label{eq:norm_Ln}
\|L^{n}\|^{\frac{1}{n}} \leq\frac{\gamma}{(n!)^{\frac{1}{2n}}}.
\end{align}
\end{lemma}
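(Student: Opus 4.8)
The plan is to reduce the claim to a single second-moment estimate for $L$ and then iterate it. Concretely, it suffices to prove that for every $\xi\in\Nspace$ and every $n\in\N$,
\begin{align*}
|L^n\xi|_T^2 \;\le\; \frac{\gamma^{2n}}{n!}\,|\xi|_T^2,
\end{align*}
since taking square roots gives $\|L^n\|\le \gamma^n/(n!)^{1/2}$ and hence the asserted bound $\|L^n\|^{1/n}\le \gamma/(n!)^{1/(2n)}$ after extracting the $n$-th root. Throughout I abbreviate $\kappa:=\bar{S}^2(\|B\|^2+\|C\|^2)$, so that the constant in the statement satisfies $\gamma^2=\kappa T$.

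First I would establish the one-step estimate: for any $\eta\in\Nspace$ and any fixed $t\in[0,T]$,
\begin{align*}
\E^0\big\|(L\eta)(t)\big\|_H^2 \;\le\; \kappa\int_0^t \E^0\|\eta_s\|_H^2\,ds.
\end{align*}
For fixed $t$ the two summands in \eqref{eq:conti-L} are stochastic convolutions $M_t=\int_0^t S_{t-s}B\eta_s\,dZ_s$ and $P_t=\int_0^t S_{t-s}C\eta_{s-}\,dY_s$, with $s$-predictable $H$-valued integrands. Applying the It\^o isometry to the $l$-dimensional Brownian integral (the components $Z^\ell$ being independent) together with $\sup_s\|S_s\|\le\bar S$ yields $\E^0\|M_t\|_H^2=\int_0^t \E^0\|S_{t-s}B\eta_s\|_{H^l}^2\,ds\le \bar S^2\|B\|^2\int_0^t\E^0\|\eta_s\|_H^2\,ds$; applying the isometry for the compensated Poisson integral (using $\langle Y\rangle_s=s$ under $\P^0$ and $\eta_{s-}=\eta_s$ for a.e.\ $s$) gives $\E^0\|P_t\|_H^2\le \bar S^2\|C\|^2\int_0^t\E^0\|\eta_s\|_H^2\,ds$. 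The decisive point is the cross term: since $Z$ and $Y$ are independent $\P^0$-martingales, one has $\langle Z^\ell,Y\rangle\equiv 0$, so $\E^0[(M_t,P_t)_H]=0$ and the two contributions add without a constant $2$. Summing the two bounds then produces exactly the factor $\kappa=\bar S^2(\|B\|^2+\|C\|^2)$.

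Next I would iterate. Writing $\phi_n(t):=\E^0\|(L^n\xi)(t)\|_H^2$ and applying the one-step estimate with $\eta=L^{n-1}\xi$ gives the recursion
\begin{align*}
\phi_n(t)\;\le\;\kappa\int_0^t \phi_{n-1}(s)\,ds,\qquad \phi_0(t)=\E^0\|\xi_t\|_H^2\le|\xi|_T^2.
\end{align*}
A straightforward induction yields $\phi_n(t)\le \kappa^n t^n/n!\,|\xi|_T^2$, and taking the supremum over $t\in[0,T]$ gives $|L^n\xi|_T^2\le (\kappa T)^n/n!\,|\xi|_T^2=\gamma^{2n}/n!\,|\xi|_T^2$, which is the reduction stated above.

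The main obstacle is not the iteration, which is routine Gronwall-type bookkeeping, but the careful justification of the one-step estimate in the infinite-dimensional setting: one must invoke the It\^o isometry for $H$-valued integrals against the multidimensional Brownian motion $Z$ and against the compensated Poisson process $Y$, control $S_{t-s}$ uniformly by $\bar S$, and—crucially—verify that the Brownian and Poisson stochastic integrals are orthogonal in $L^2(\P^0)$. The orthogonality is what delivers the sharp constant $\|B\|^2+\|C\|^2$ rather than $2(\|B\|^2+\|C\|^2)$; a crude triangle-inequality estimate would lose this factor and thus fail to produce the stated $\gamma$. I would therefore take particular care that the integrands are genuinely predictable for fixed $t$ and that the independence of $Z$ and $Y$ under $\P^0$ is used to annihilate the covariation term.
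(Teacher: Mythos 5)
Your proof is correct and is in essence the paper's own argument: the paper combines $Z$ and $Y$ into a single $(l+1)$-dimensional martingale $M=(Z^\top,Y)^\top$ whose quadratic variation is $I_{l+1}\,t$, so the It\^o isometry delivers the sum $\|B\|^2+\|C\|^2$ with no cross term---exactly the orthogonality you verify by hand from $\langle Z^\ell,Y\rangle\equiv 0$---and then iterating the isometry over the simplex produces the factor $T^n/n!$ that your induction yields. The differences are purely presentational: separate isometries plus an explicit cross-term argument instead of the vector-martingale formulation, and a one-step recursion in place of the explicit $n$-fold iterated integral.
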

The proof is given in  Appendix~\ref{app:Proofs}.

\begin{lemma}\label{thm:zakai-abstract-eu}  Equation
\eqref{eq:zakai-Hilbert_abstract} has a unique solution in
$\Nspace$,
\begin{align}\label{eq:zakai-Hilbert_abstract_solution}
\xi=(I-L)^{-1}\xi^{[0]}:=\sum_{i=0}^{\infty} L^i \xi^{[0]},
\end{align}
and $(I-L)^{-1}:\Nspace\rightarrow \Nspace$ is a bounded linear operator:
$\|(I-L)^{-1}\|<\kappa$ with $\kappa=\frac{2}{\sqrt{3}} e^{2 \gamma^2}$.
\end{lemma}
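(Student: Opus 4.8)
The plan is to read everything off the quasinilpotency estimate of Lemma~\ref{lem:Lbounds}, which gives $\|L^n\| \le \gamma^n/(n!)^{1/2}$ for every $n$. First I would note that the scalar series $\sum_{i=0}^\infty \gamma^i/(i!)^{1/2}$ converges (e.g.\ by the ratio test, since $(i!)^{1/2}$ grows super-exponentially), so that $\sum_{i=0}^\infty \|L^i\| < \infty$. As $\Nspace$ is a Banach space, the space of bounded linear operators on it is complete, and hence the Neumann series $\sum_{i=0}^\infty L^i$ converges absolutely in operator norm to a bounded operator. I would then identify this operator as the inverse of $I-L$ in the standard way: the telescoping identity $(I-L)\sum_{i=0}^N L^i = I - L^{N+1} = \sum_{i=0}^N L^i (I-L)$ together with $\|L^{N+1}\| \le \gamma^{N+1}/((N+1)!)^{1/2} \to 0$ yields, in the limit $N\to\infty$, that $(I-L)\sum_{i=0}^\infty L^i = \sum_{i=0}^\infty L^i(I-L) = I$.

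Existence and uniqueness for \eqref{eq:zakai-Hilbert_abstract} are then immediate: the equation $\xi = \xi^{[0]} + L\xi$ is equivalent to $(I-L)\xi = \xi^{[0]}$, and since $I-L$ is invertible the unique solution is $\xi = (I-L)^{-1}\xi^{[0]} = \sum_{i=0}^\infty L^i \xi^{[0]}$, which is \eqref{eq:zakai-Hilbert_abstract_solution}.

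The only genuinely quantitative step — and the one I expect to need a little care — is the explicit bound $\|(I-L)^{-1}\| < \kappa$ with the stated constant. I would start from $\|(I-L)^{-1}\| \le \sum_{i=0}^\infty \|L^i\| \le \sum_{i=0}^\infty \gamma^i/(i!)^{1/2}$ and estimate the last series by Cauchy--Schwarz after inserting a free parameter $c \in (0,1)$:
\begin{align*}
\sum_{i=0}^\infty \frac{\gamma^i}{(i!)^{1/2}} = \sum_{i=0}^\infty \Big(\frac{\gamma^i}{(i!)^{1/2}}\, c^{-i/2}\Big) c^{i/2} \le \Big(\sum_{i=0}^\infty \frac{\gamma^{2i}}{i!\, c^{i}}\Big)^{1/2}\Big(\sum_{i=0}^\infty c^{i}\Big)^{1/2} = \frac{e^{\gamma^2/(2c)}}{(1-c)^{1/2}}.
\end{align*}
Choosing $c = 1/4$ makes the exponent $\gamma^2/(2c) = 2\gamma^2$ and the prefactor $(1-c)^{-1/2} = 2/\sqrt{3}$, which is precisely $\kappa = \tfrac{2}{\sqrt{3}} e^{2\gamma^2}$. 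The inequality is moreover \emph{strict}, since the two sequences entering the Cauchy--Schwarz step are never proportional (for $\gamma>0$ one cannot have $\gamma^i/(i!)^{1/2} c^{-i/2} = \mathrm{const}\cdot c^{i/2}$ for all $i$; for $\gamma=0$ the sum equals $1 < \kappa$ outright), so $\|(I-L)^{-1}\| < \kappa$. Thus the main obstacle is not the functional-analytic content, which is the routine Neumann-series argument, but rather pinning down the right value of the auxiliary parameter ($c=1/4$) that reproduces the claimed constant $\kappa$.
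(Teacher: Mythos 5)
Your proof is correct and follows essentially the same route as the paper: the quasinilpotency bound of Lemma~\ref{lem:Lbounds} feeding a Neumann (Volterra) series, with the constant $\kappa$ extracted by Cauchy--Schwarz, where your choice $c=\tfrac14$ is exactly the paper's splitting $\gamma^n = 2^{-n}(2\gamma)^n$. Your added details (the telescoping identity identifying the limit as $(I-L)^{-1}$, and the observation that non-proportionality makes the bound strict, matching the strict inequality $\|(I-L)^{-1}\|<\kappa$ claimed in the lemma) are welcome elaborations of steps the paper leaves implicit.
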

\begin{proof}The crucial part in the proof of the lemma is the estimate
\begin{align*}
\sum_{n=0}^{\infty} \| L \|^n  &\le \sum_{n=0}^{\infty}
     \frac{\gamma^n}{(n!)^{\frac{1}{2}}}
     = \sum_{n=0}^{\infty} 2^{-n} \frac{ (2\gamma)^n}{(n!)^{\frac{1}{2}}}
\leq \left ( \Big(\sum_{n=0}^{\infty}2^{-2n} \Big) \Big ( \sum_{n=0}^{\infty}
\frac{ (2 \gamma)^{2 n}}{n !} \Big ) \right )^{\frac{1}{2}} = \kappa,
\end{align*}
which shows that the Volterra series $\sum_{i=0}^n L^i$ does in fact converge as $ n
\to\infty$.
\end{proof}

In view of Lemma~\ref{thm:zakai-abstract-eu}  we can define the mapping ${F}:
H\times \mathcal{U}^l \times \mathcal{U} \times \mathcal{S} \rightarrow
\Nspace$  by
$$ F(f,B,C,S) := \xi,$$
where $\xi$ is   the unique solution in $\Nspace$ of \eqref{eq:zakai-Hilbert} with coefficients $(f,B,C,S)$.
The following result shows that $F$  is continuous.
\begin{proposition}\label{thm:Continuity Theorem}
Consider sequences $(f^{(n)})$, $(B^{(n)})$, $(C^{(n)})$ and $(S^{(n)})$ in $H$, $\mathcal{U}^l$, $\mathcal{U}$ and $\mathcal{S}$, converging
to $f\in H$, $B\in \mathcal{U}^l$, $C\in \mathcal{U}$ and $S \in \mathcal{S}$, respectively.
Then,
\begin{align*}
\Big|  F(f^{(n)},\, B^{(n)} \,C^{(n)},\, S^{(n)})
  -F(f, B, C, S) \Big|_T \rightarrow 0, \quad \text{as} \quad
  n\rightarrow \infty.
\end{align*}
\end{proposition}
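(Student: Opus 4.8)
The plan is to prove continuity of $F$ by exploiting the resolvent representation $F(f,B,C,S) = (I-L)^{-1}\xi^{[0]}$ established in Lemma~\ref{thm:zakai-abstract-eu}, together with the uniform bound $\|(I-L)^{-1}\| < \kappa$. The key observation is that the map $(f,B,C,S) \mapsto F(f,B,C,S)$ factors through two pieces: the free term $\xi^{[0]} = S\cdot f$ and the operator $L = L_{B,C,S}$ defined in \eqref{eq:conti-L}. If I can show that $\xi^{[0],(n)} \to \xi^{[0]}$ in $\Nspace$ and that $L_{B^{(n)},C^{(n)},S^{(n)}} \to L_{B,C,S}$ in an appropriate operator sense (at least strongly on the relevant elements), then continuity of $F$ should follow from a standard perturbation identity for inverses.

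First I would establish convergence of the free term. Writing $\xi^{[0],(n)}_t = S^{(n)}_t f^{(n)}$, I would estimate $\|S^{(n)}_t f^{(n)} - S_t f\|_H \le \|S^{(n)}_t (f^{(n)}-f)\|_H + \|(S^{(n)}_t - S_t)f\|_H$. The first term is controlled by the uniform bound $\bar S$ from \eqref{eq:bounded-S} times $\|f^{(n)}-f\|_H \to 0$; the second term goes to zero uniformly in $t$ by the very definition of convergence in $\mathcal{S}$ (uniform strong convergence on $[0,T]$). Hence $|\xi^{[0],(n)} - \xi^{[0]}|_T \to 0$. Next I would turn to the operators. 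The natural decomposition is
\begin{align*}
F(f^{(n)},B^{(n)},C^{(n)},S^{(n)}) - F(f,B,C,S)
&= (I-L^{(n)})^{-1}\xi^{[0],(n)} - (I-L)^{-1}\xi^{[0]},
\end{align*}
which I would split as $(I-L^{(n)})^{-1}(\xi^{[0],(n)} - \xi^{[0]})$ plus $\big((I-L^{(n)})^{-1} - (I-L)^{-1}\big)\xi^{[0]}$. The first summand is bounded by $\kappa\,|\xi^{[0],(n)}-\xi^{[0]}|_T \to 0$, using that the uniform bound in Lemma~\ref{thm:zakai-abstract-eu} holds simultaneously for all $L^{(n)}$ (since $\bar S$, $\|B\|$, $\|C\|$ are uniformly bounded along the convergent sequences, so the constant $\gamma$, and thus $\kappa$, can be taken uniform). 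For the second summand I would use the resolvent identity $(I-L^{(n)})^{-1} - (I-L)^{-1} = (I-L^{(n)})^{-1}(L^{(n)}-L)(I-L)^{-1}$, reducing everything to showing that $(L^{(n)} - L)\eta \to 0$ in $\Nspace$ for the fixed element $\eta := (I-L)^{-1}\xi^{[0]}$.

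The main obstacle, and the technical heart of the argument, is showing $(L^{(n)}-L)\eta \to 0$ in the norm $|\cdot|_T$. Here I must compare the two stochastic convolution integrals
\begin{align*}
(L^{(n)}\eta)(t) - (L\eta)(t) = \int_0^t \big(S^{(n)}_{t-s}B^{(n)} - S_{t-s}B\big)\eta_s\,dZ_s + \int_0^t \big(S^{(n)}_{t-s}C^{(n)} - S_{t-s}C\big)\eta_{s-}\,dY_s.
\end{align*}
Applying the It\^o isometry under $\P^0$ (where $Z$ is Brownian motion and $Y$ is a compensated Poisson process, both with unit-intensity structure), I would bound $\E^0\|(L^{(n)}\eta)(t)-(L\eta)(t)\|_H^2$ by time integrals of $\E^0\|(S^{(n)}_{t-s}B^{(n)} - S_{t-s}B)\eta_s\|^2$ and the analogous $C$-term. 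Each integrand I would split into $S^{(n)}_{t-s}(B^{(n)}-B)\eta_s$, controlled uniformly by $\bar S\,\|B^{(n)}-B\|\,\|\eta_s\|$ via the operator-norm convergence in $\mathcal{U}^l$, and $(S^{(n)}_{t-s}-S_{t-s})B\eta_s$, where I must exploit strong convergence of semigroups. The delicate point is that strong convergence $S^{(n)}_r x \to S_r x$ is only pointwise in $x$, yet the integrand varies with $s$; I would handle this by dominated convergence, using that $\|(S^{(n)}_r - S_r)B\eta_s\|_H \le 2\bar S\,\|B\|\,\|\eta_s\|_H$ provides an $L^2(ds\,d\P^0)$-integrable dominating function (since $\eta \in \Nspace$ has finite norm), while the pointwise-in-$(s,\omega)$ convergence to zero follows from uniform strong convergence on $[0,T]$. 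This yields $\sup_{t}\E^0\|(L^{(n)}\eta)(t)-(L\eta)(t)\|_H^2 \to 0$, completing the estimate and hence the proof. One should also verify mean-square continuity in $t$ of the limiting processes so that the differences genuinely lie in $\Nspace$, but this is routine given the semigroup and isometry bounds already in hand.
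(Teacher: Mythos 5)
Your proposal is correct and follows essentially the same route as the paper: your resolvent-identity decomposition $(I-L^{(n)})^{-1}(\xi^{[0,(n)]}-\xi^{[0]}) + (I-L^{(n)})^{-1}(L^{(n)}-L)(I-L)^{-1}\xi^{[0]}$ is algebraically identical to the paper's identity \eqref{eq:chi-diff}, since $(I-L)^{-1}\xi^{[0]}=\xi$, and the three key ingredients (uniform bound $\kappa$ on the inverses via uniform boundedness of the coefficient norms, convergence of the free term, and $(L^{(n)}-L)\xi\to 0$ via the It\^o isometry and dominated convergence using the uniform-in-time strong convergence of the semigroups) all match the paper's argument. The only cosmetic difference is that the paper derives the decomposition by manipulating the two fixed-point equations directly rather than invoking the resolvent identity, and makes explicit the Banach--Steinhaus step that you leave implicit.
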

\begin{proof}[Proof of Proposition~\ref{thm:Continuity Theorem}]

Since $S^{(n)} \rightarrow S$ , $B^{(n)} \rightarrow B$ and $C^{(n)}
\rightarrow C$, by the uniform boundedness principle there exist $\bar{N}$ and
a constant $\bar{\gamma}$ such that
\begin{align}
\sup_{t\in [0,T], n\geq \bar{N}}\Big\{\| S_t \| \vee \| S^{(n)}_t\|
\vee \|B\| \vee \|B^{(n)} \| \vee \|C\|\vee \|C^{(n)} \|
\Big\}\leq \bar{\gamma}.
\end{align}
In the following, we only consider sufficiently large $n>\bar{N}$. Set
\begin{align*}
\xi:=F(f, B,C, S), \quad \xi^{(n)}:=F(f^{(n)}, B^{(n)}, C^{(n)}, S^{(n)}).
\end{align*}
Together with $\xi_{t}^{[0,(n)]}:=S^{(n)}_t f^{(n)}$
we define $L^{(n)}$ by
\begin{align*}
(L^{(n)} \xi)(t)&:=\int_0^t S^{(n)}_{t-s} B^{(n)}
(\xi_{s})dZ_s +\int_0^t S^{(n)}_{t-s} C^{(n)}(\xi_{s-})dY_s
\end{align*}
for all $\xi\in\Nspace$. Then, by the very definition of $F$,
\begin{align*}
\xi^{(n)}=\xi^{[0,(n)]} +L^{(n)}  \xi^{(n)}, \quad
\xi=\xi^{[0]}+L{\xi}.
\end{align*}
Hence
\begin{align}
\xi^{(n)}-\xi
&=(\xi^{[0,(n)]}-\xi^{[0]})+L^{(n)}(\xi^{(n)}-\xi)+(L^{(n)}-L)\xi \nonumber \\
\label{eq:chi-diff}
&=(I-L^{(n)})^{-1} \Big((\xi^{[0,(n)]}-\xi^{[0]})+(L^{(n)}-L)\xi \Big).
\end{align}
By Lemma \ref{thm:zakai-abstract-eu} there exists a
constant $\kappa=\kappa({\bar{\gamma}})$, such that
\begin{align}\label{eq:bounded-1-L}
\big\| (I-L^{(n)})^{-1} \big\| \leq  \kappa.
\end{align}
Furthermore, as $S^{(n)} \in \mathcal{S}$,
\begin{align*}
\nonumber \big| \xi^{[0,(n)]} -\xi^{[0]} \big|_T^2 =& \sup_{t\in[0,T]} \Big\| S^{(n)}_tf^{(n)}-S_tf \Big\|_H^2\\
\nonumber \le &2\sup_{t\in [0,T]} \Big\| S^{(n)}_t(f^{(n)}-f)
\Big\|_H^2
+2 \sup_{t\in[0,T] } \Big\| (S^{(n)}_t-S_t)f\Big\|_H^2 \\
\leq & 2 \bar{\gamma}^2 \| f^{(n)}-f
\|_H^2 +2 \sup_{t \in [0,T] }\Big\| (S^{(n)}_t-S_t)f \Big\|_H^2.
\end{align*}
The last term converges to zero as  $(f^{(n)})$ and $(S^{(n)})$ converge to  $f$ and $S$, respectively.

Finally, we show  that $(L^{(n)}-L)\xi$ converges to zero. From the definition
of $L$ and $L^{(n)}$ we obtain by the It\^o-isometry  that
\begin{align*}
\lefteqn{ | (L^{(n)}-L)\xi |_T^2 =
\sup_{t\in[0,T]} \E^0 \Big(\|((L^{(n)}-L) \xi) (t)\|_H^2 \Big)}\\
 & = \sup_{t\in[0,T]} \E^0  \bigg(  \int_0^t \Big\|(
S^{(n)}_{t-s}B^{(n)} -S_{t-s}B ) \xi_{s}\Big\|_{H^l}^2
ds   + \int_0^t \Big\|( S^{(n)}_{t-s}C^{(n)}-S_{t-s}C )
\xi_{s}\Big\|_H^2   ds \Big) \\
&\leq    2 \bigg[\sup_{t\in[0,T]} \E^0  \Big(   \int_0^t
\Big\|S^{(n)}_{t-s}(B^{(n)}-B) \xi_{s}\Big\|_{H^l}^2 ds\Big)
+  \sup_{t\in[0,T]} \E^0\Big( \int_0^t \Big\|(S^{(n)}_{t-s} -S_{t-s})B  \xi_{s}\Big\|_{H^l}^2 ds\Big)\\
&+   \sup_{t\in[0,T]} \E^0 \Big(\int_0^t \Big\| S^{(n)}_{t-s}(C^{(n)}-C)
\xi_{s}\Big\|_H^2   ds \Big) +  \sup_{t\in[0,T]} \E^0 \Big(\int_0^t \Big\|
 (S^{(n)}_{t-s}-S_{t-s})C  \xi_{s}\Big\|_H^2   ds \Big)\bigg]\\
&:=2(E_1+E_2+E_3+E_4).
\end{align*}
We consider the terms $E_1$ to $E_4$ separately. Observe that by \eqref{eq:bounded-S},
\begin{align*}
E_1
  \leq
\bar{\gamma}^2  \E^0  \Big(\int_0^T \Big\|
 \Big(B^{(n)} - B\Big)  ( \xi_{\tau})\Big\|_{H^l}^2 d\tau \Big).
\end{align*}
As  $B^{(n)}$ converges to $B$, we have for all  $t\in [0,T]$ and $\omega\in
\Omega$
 \begin{align*}
\Big\|
 \Big(B^{(n)} - B\Big)  ( \xi_{s}(\omega))\Big\|_{H^l}^2\rightarrow 0.
\end{align*}
In order to show that  $E_1 \to 0$ as $n\to \infty$ we  apply dominated
convergence. Since   $\|B^{(n)}\|, \|B \| \leq \bar\gamma$ we get
$$\Big\| (B^{(n)} -B\Big) ( \xi_{s})\Big\|_{H^l}^2 \leq 4 \bar{\gamma}^2 \|
   \xi_{s}\|_{H}^2$$
and the last term is integrable since $\E^0  \Big(\int_0^T \|  \xi_{s}\|_{H}^2
ds\Big) \leq   T  |  \xi |_T^2< \infty.$

In a similar way
\begin{align}
E_2
 \leq &\sup_{t\in[0,T]} \E^0\Big( \int_0^t \sup_{s\leq  \tau \leq T}
 \Big\|( S^{(n)}_{\tau-s} -S_{\tau-s}) B \xi_{s}\Big\|_{H^l}^2 ds \Big)\nonumber\\
  = &  \E^0\Big( \int_0^T \sup_{s\leq  \tau \leq T}
 \Big\|( S^{(n)}_{\tau-s} -S_{\tau-s}) B \xi_{s}\Big\|_{H^l}^2 ds \Big)
\end{align}
while  uniform strong convergence of $S^{(n)}$ gives
$\sup_{s\leq  \tau \leq T}
 \Big\|( S^{(n)}_{\tau-s} -S_{\tau-s}) B \xi_{s}(\omega)\Big\|_{H^l}^2\rightarrow  0$
 for all $\omega \in \Omega$. As
\begin{align*}
\E^0\Big( \int_0^T \sup_{s\leq  \tau \leq T}
 \Big\|( S^{(n)}_{\tau-s} -S_{\tau-s}) B \xi_{s}\Big\|_{H^l}^2 ds \Big) \leq
 4 \bar{\gamma}^4 \E^0  \Big( \int_0^T \|\xi_{s}\|_H^2 ds\Big) \leq  4 \bar{\gamma}^4  T  |  \xi |_T^2
\end{align*} and  $ |  \xi |_T<\infty $ by Lemma \ref{thm:zakai-abstract-eu} we obtain
again by dominated convergence that $E_2 \to 0$. Analogously we obtain $E_3\to 0$ and $E_4\to 0$
and we conclude.
\end{proof}

Finally we turn to the
\begin{proof}[Proof of Theorem \ref{Thm:convergence_linear_part}]
Under Condition~\eqref{eq:conditions_convergence} the assumptions of Proposition
\ref{thm:Continuity Theorem} are clearly satisfied for the Galerkin approximation of the
Zakai equation, as $P_n x\rightarrow x$, $P_n B P_n x \rightarrow Bx$ and $P_n C P_n x
\rightarrow C x$ for all $x \in H$. For the proof of the converse statement (the fact
that \eqref{eq:conditions_convergence} is also necessary for the convergence of the
Galerkin approximation) we refer to the proof of Theorem~6.1 in
\citeN{bib:germani-piccioni-87}.
\end{proof}

%
%
%

\section{Numerical methods}

\label{sec:numerical-methods}

In this section
we discuss various aspects of the practical implementation of the Galerkin approximation
for the Zakai equation. We begin with a few algorithms for the numerical solution of the
SDE system \eqref{eq:Upsilon-e}. In Section~\ref{Hermite Expansion} we consider the
special class of basis functions constructed from Hermite polynomials. In
Section~\ref{section:The Adaptive  Galerkin Approximation} we finally show that 
the efficiency of the Galerkin approximation can be improved substantially if the scale
and the location of the bases are changed adaptively.

\subsection{Numerical solution of the Zakai equation}
\label{subsec:numerical-solutions}

In order to solve  the SDE system in \eqref{eq:Upsilon-e} numerically, we discretize the
system in time. As numerical schemes we consider the Euler-Maruyama method and the
splitting-up method. While the Euler-Maruyama method is fast to implement, it can become
quite unstable  if the time step  is relatively large (see Figure
\ref{fig:estimation-linear-KBt1}). This difficulty can be overcome with the splitting-up
method. Note that in practical filtering problems the observation often comes at discrete
time points, so that the time-discretization step can not be chosen arbitrarily small.

Our aim is to approximate Equation \eqref{eq:Upsilon-e}. It will be convenient to use the process $N$ as driver (instead of $Y=N-t$). Rewriting  Equation \eqref{eq:Upsilon-e} leads to
\begin{align}\label{eq:16-2}
d \Upsilon^{(n)}_t  =D^{-1} \Big(( A - C ) \Upsilon^{(n)}_t dt +  \sum_{\ell =1}^l B^\ell \Upsilon^{(n)}_t dZ_t
                     +  C \Upsilon^{(n)}_{t-}dN_t\Big ), \quad  \Upsilon^{(n)}_0 =q^{(n)}_0.
\end{align}
Consider some  the equidistant partition $0 =t_0<t_1<\cdots <t_K= T$  with step size
$\Delta:=T/K$. In the sequel we consider $n$ and the partition fixed and denote the approximation at the  time point $t_k$, $0 \le k \le K$,  by $\U_k = (\psi_{k,1}, \dots, \psi_{k,n})^\prime$.

\paragraph{Euler-Maruyama method.}
The Euler-Maruyama method  (EM method) generalizes the Euler method to stochastic differential
equations, see e.g.~\citeN{bib:McLachlan}.  It is described in the following algorithm:
\begin{algorithm}[EM method]\label{algo:EM}
For  $k=1,\dots,K$,   compute $\U_{k}$ from   $\U_{k-1}$ by
\begin{align*}
 \U_{k}= &\U_{k-1}+D^{-1}\Big(
  (A -C)  \U_{k-1} \Delta +   \sum_{\ell=1}^l B^\ell  \Upsilon_{k-1} (Z^\ell_{t_{k}}-Z^\ell_{t_{k-1}}) +  C \U_{k-1}
(N_{t_{k}}-N_{t_{k-1}})\Big)\,.
\end{align*}
\end{algorithm}

\paragraph{Splitting-up method.}
The splitting-up method (SU method) is a numerical method  based on semigroup theory. It
decomposes the original SDE into stochastic and a deterministic equations which are
easier to handle. We refer to  \citeN{bib:bensoussan-et-al-90} and \citeN{bib:leGland-92}
for further details in the  case of continuous observations. Here we propose an extension
of the method to the case with mixed observations. For simplicity, we assume that the basis
$\{e_1,e_2,\dots,e_n\}$ consists of orthonormal functions so that  $D=D^{-1}=I_{n}$.

Intuitively, the SU method computes $\U_k$ from $\U_{k-1}$ in three steps:
the first step uses only the $dt$-part of equation \eqref{eq:16-2} and returns the solution
of the SDE $d\U^1_t = ( A - C ) \Upsilon^{1}_t dt$. The solution of this  equation on $[t_{k-1},t_k]$ is the matrix
exponential $\Upsilon^1_{t_k}=\exp\big((A-C)\Delta\big)\Upsilon^1_{t_{k-1}}$. Step 2
incorporates the new information from $Z$ via the linear SDE $ d
\Upsilon^2_t= B \Upsilon^2_t dZ_t$ with initial condition $\Upsilon^2_{t_{k-1}} =
\Upsilon^1_{t_{k}}$. The solution of this SDE is given by the matrix exponential
$$ \Upsilon^2_{t_{k}}  =\exp\bigg(\sum_{\ell=1}^l \Big(B^\ell (Z_{t_k}-Z_{t_{k-1}})-\frac{1}{2}(B^\ell)^2\Delta\Big)\bigg)\Upsilon^1_{t_{k}}.$$
The new jump  information is incorporated via the linear equation $ d\Upsilon^3_t= C \Upsilon^3_{t-} dN_t$, this time with initial condition
$\Upsilon^3_{t_{k-1}} = \Upsilon^2_{t_{k}}$, which gives
$$ \Upsilon^3_{t_{k}}=(I_n+C)^{(N_{t_k}-N_{t_{k-1}})} \Upsilon^2_{t_{k}}. $$
These steps lead to the following algorithm:
\begin{algorithm}[SU method]\label{algo:splitting-up}
For  $k=1,\dots,K$, compute  $\U_{k}$ from   $\U_{k-1}$ by
 \begin{itemize}
  \item[(1)]  Compute  $\Upsilon^1_{k}:=\exp\big( (A-C)\Delta\big)\Upsilon_{k-1}$.
  \item[(2)] Compute  $ \Upsilon^2_{k}:=\exp\big(\sum_{\ell=1}^l (B^\ell (Z_{t_k}-Z_{t_{k-1}})-\frac{1}{2}(B^\ell)^2\Delta)\big)\Upsilon^1_{k}$.
\item[(3)] Return $ \U_{k}:=(I_n+C)^{(N_{t_k}-N_{t_{k-1}})} \Upsilon^2_{k}$
\end{itemize}
\end{algorithm}

%
%

 \subsection{Galerkin approximation based on Hermite polynomials}\label{Hermite Expansion}

The choice of the  basis functions has a large impact on the quality of the Galerkin
approximation. \citeN{bib:ahmed-radaideh-97} propose to use Gaussian  series, i.e.~a
series build by densities of $n$-dimensional Gaussian distributions with different means
and arbitrary positive, symmetric covariance matrices.  It is shown 
that these are linearly independent and complete and hence they can be used to
construct Galerkin approximations as described above.

In this paper we instead consider a basis computed from Hermite polynomials. This basis
has a number of computational advantages over  Gaussian series   as will become clear
below. We start by recalling some properties of Hermite polynomials, see e.g.\
\citeN{bib:courant-hilbert-68}. The $i$th Hermite polynomial is defined  by
\begin{align}\label{eq:def-hermite}
f_i(x)=(-1)^i e^{x^2/2} \frac{d^i}{d x^i}e^{-x^2/2}, \quad x\in \R
\end{align}
with $i=0,1,2,\ldots$. It follows that $ f_0(x)=1,$ $ f_1(x)=x,$
 $f_2(x)=x^2-1$ and $f_3(x)=x^3-3x$. The Hermite  polynomials  are orthogonal with respect to the weighting
function $\phi(x):=(2\pi)^{-\nicefrac{1}{2}}\, e^{-\nicefrac{x^2}{2}}$, as $\int_{\R} f_i(x) f_j(x)
\phi(x)dx=i! \, \ind{i=j}$. Consequently, the functions $ e_1, e_2, \ldots$ given by
\begin{align}\label{eq:def-hermite-basis}
e_i(x):=\sqrt{\frac{\phi(x)}{(i-1)!}} f_{i-1}(x), \quad x\in \R,
\end{align}
constitute an orthonormal basis of $L^2(\R)$, which we call \emph{Hermite basis}.
In the following result we deduce  the convergence of the Galerkin approximation with
the use of  \eqref{eq:convenientcondition}.

\begin{proposition} \label{cor7.3}
Assume that {\bf (A1)} holds and that $q_t^{(n)}$ is the Galerkin approximation of the Zakai equation with respect to the Hermite basis. Then,  for any $q_0\in V$,
\begin{align*}
\sup_{t\in[0,T]}\E^0 (\|q^{(n)}_t-q_t\|_H^2)\rightarrow 0, \quad
\text{as}\quad n\rightarrow \infty.
\end{align*}
\end{proposition}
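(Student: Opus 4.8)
The plan is to reduce the statement to the abstract convergence criterion already established and then verify that criterion for the concrete Hermite basis. By Theorem~\ref{Thm:convergence_linear_part} the claimed mean-square convergence is \emph{equivalent} to condition~\eqref{eq:conditions_convergence}, and by the sufficient condition recorded after that theorem (Theorem~4 in \citeN{bib:germani-piccioni-84}) it is enough to prove that $\bigcup_{n\in\N}H_n$ is dense in $V=H^1(\R)$, see~\eqref{eq:convenientcondition}. First I would check that the Hermite basis is admissible in the sense required in Section~\ref{subsec:Galerkin approximation}, i.e.\ that $e_i\in D(\cA^*)\cap D(\cA)$: the $e_i$ are Schwartz functions, so under \textbf{(A1)} (where $b,\sigma,h$ and the derivatives of $a=\sigma\sigma^\top$ are bounded) the functions $\ccL e_i$ and $\ccL^* e_i$ again lie in $H$, which together with completeness of $\{e_i\}$ in $L^2(\R)$ makes $\{e_i\}$ an admissible orthonormal basis. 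The whole problem is therefore reduced to one deterministic, $Z$- and $N$-free statement: density of the Hermite span in the $H^1$-norm.

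For this density I would exploit that the Hermite functions diagonalise the harmonic oscillator. A direct computation using $f_i''-x f_i'+i f_i=0$ shows that $\cT:=-\frac{d^2}{dx^2}+\frac{x^2}{4}$ satisfies $\cT e_i=(i-\tfrac12)e_i$, so $\{e_i\}$ is an orthonormal eigenbasis of the positive self-adjoint operator $\cT$ with smallest eigenvalue $\tfrac12$. On the form domain $D(\cT^{1/2})$ one has $(\cT u,u)=\|u'\|_H^2+\tfrac14\|x\,u\|_H^2$, whence $\|u'\|_H^2\le(\cT u,u)$ and $\|u\|_H^2\le 2\,(\cT u,u)$, giving the key norm comparison $\|u\|_{H^1}^2\le 3\,(\cT u,u)=3\,\|\cT^{1/2}u\|_H^2$. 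Consequently, for $u\in D(\cT^{1/2})$ the truncated expansions $S_Nu:=\sum_{i=1}^N(u,e_i)e_i\in H_N$ satisfy $\|u-S_Nu\|_{H^1}^2\le 3\sum_{i>N}(i-\tfrac12)|(u,e_i)|^2\to 0$, because $\sum_i(i-\tfrac12)|(u,e_i)|^2=\|\cT^{1/2}u\|_H^2<\infty$. Thus every $u\in D(\cT^{1/2})$ is an $H^1$-limit of elements of $\bigcup_nH_n$, and since $C_0^\infty(\R)\subset D(\cT^{1/2})$ is dense in $H^1(\R)$, the span of the Hermite basis is dense in $V$, as required.

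The step I expect to be the only genuine obstacle is exactly this upgrade from $L^2$- to $H^1$-density: orthonormality and completeness of $\{e_i\}$ yield $L^2$-convergence of the Fourier expansion for free, but the Zakai convergence criterion~\eqref{eq:conditions_convergence} (via~\eqref{eq:convenientcondition}) additionally requires control of the \emph{first derivatives}. The harmonic-oscillator identity $\cT e_i=(i-\tfrac12)e_i$ and the resulting bound $\|\cdot\|_{H^1}\le\sqrt3\,\|\cT^{1/2}\cdot\|_H$ are precisely what convert spectral (hence $L^2$) information into $H^1$ information. An equally valid, slightly softer alternative would be to invoke the classical fact that finite linear combinations of Hermite functions are dense in the Schwartz space $\mathcal{S}(\R)$ in its natural topology, together with the continuous dense embedding $\mathcal{S}(\R)\hookrightarrow H^1(\R)$; I prefer the self-contained eigenfunction argument above because it makes the relevant $H^1$-estimate explicit. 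Once density in $V$ is established, Theorem~\ref{Thm:convergence_linear_part} closes the proof immediately.
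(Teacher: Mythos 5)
Your proof is correct, and it follows the paper's reduction step for step: both arguments invoke Theorem~\ref{Thm:convergence_linear_part} together with the sufficient condition \eqref{eq:convenientcondition}, so that everything hinges on showing that $\bigcup_{n\in\N}H_n$ is dense in $V=H^1(\R)$. Where you genuinely differ is in how that density is established. The paper does not prove it; it cites Proposition~1 and Theorem~4 of \citeN{bib:Bongioanni-Torrea}, which supply, for every $u\in C_0^{\infty}$, a sequence $u_n$ in the Hermite span with $\|u-u_n\|_V\to 0$, and then concludes by density of $C_0^{\infty}$ in $V$. You instead give a self-contained spectral argument: the basis functions \eqref{eq:def-hermite-basis} are eigenfunctions of the harmonic oscillator $\cT=-\frac{d^2}{dx^2}+\frac{x^2}{4}$ with $\cT e_i=(i-\tfrac12)e_i$ (your computation, resting on the Hermite ODE $f_{i-1}''-xf_{i-1}'+(i-1)f_{i-1}=0$, is correct), the form identity $(\cT u,u)=\|u'\|_H^2+\tfrac14\|xu\|_H^2$ yields $\|u\|_{H^1}^2\le 3\,\|\cT^{1/2}u\|_H^2$, and therefore the Hermite partial sums of any $u\in D(\cT^{1/2})$ converge in $H^1$; density then follows since $C_0^{\infty}\subset D(\cT^{1/2})$ is dense in $H^1(\R)$. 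This is essentially the mathematics underlying the cited reference (Hermite--Sobolev spaces are defined through powers of the oscillator), so nothing is lost in rigor or generality; what your route buys is an explicit, verifiable $H^1$ estimate in place of an external citation, at the modest cost of invoking the standard facts that $\cT$ is essentially self-adjoint with the $e_i$ a complete eigenbasis, so that the quadratic-form and spectral expressions for $\|\cT^{1/2}u\|_H^2$ agree on the form domain. Your preliminary check that $e_i\in D(\cA^*)\cap D(\cA)$, needed for the Galerkin scheme to be well defined with this basis, is a point the paper leaves implicit.
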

The proof is given in Appendix~\ref{app:Proofs}. To actually obtain the Galerkin approximation under the Hermite basis one computes the
coefficient matrices  $A, B^1,\dots,B^l, C$ as in \eqref{eq:GC-a} with respect to the
Hermite basis (the matrix $D$ is the identity matrix as the Hermite basis consists of orthonormal functions) and then solves \eqref{eq:Upsilon-e} numerically by one of the methods
described before. In some special cases it is even possible to  obtain  explicit formulas for the entries of the coefficient matrices, as is illustrated in the following example.

\begin{example}[Kalman filter with point process observations]\label{exam:linear_model}
Consider  $d=m=l=1$ and assume that $b(x)=bx$, that  $\sigma(x)=\sigma$ and that $X_0 \sim \cN(\mu_0,\sigma_0^2)$, so that
$$ X_t = X_0 + \int_0^t b X_s ds + \sigma V_t$$
is a linear Gaussian process with generator $\ccL f (x)= bx f^\prime(x)+\frac{\sigma^2}{2} f^{\prime\prime}(x).$  Assume moreover that $h$ and $\lambda$ are of the form  $h(x)=hx$ and $\lambda(x)=\lambda x^2$ with $\lambda >0$ such that
the observation is given by $Z_t = \int_0^t h X_s ds + W_t$ and by the doubly stochastic Poisson process $N$ with intensity $(\lambda X_t^2)_{t \ge 0}$. In the next lemma we give explicit formulas for  the coefficient matrices $A$, $B$ and $C$.

\begin{lemma}\label{lemma:coefficients-kalman}
Consider  Then with $h(x)=hx$ and $\lambda(x)=\lambda x^2$ we obtain
\begin{align}\label{eq:kalman-a}
 a_{ji}=(e_i,\ccL e_j) &=
\begin{cases}
- \frac{b}{2} + \frac{\sigma^2}{8}(1-2i) & \text{ for } i=j \\
(-\frac{b}{2} + \frac{\sigma^2}{8}) \sqrt{j(j+1)} & \text{ for } i=j+2 \\
(\frac{b}{2} + \frac{\sigma^2}{8}) \sqrt{(j-1)(j-2)}& \text{ i=j-2}
\end{cases} \\ \label{eq:kalman-b}
 b_{ji}=(e_i, h(\cdot) \,  e_j  ) &=
\begin{cases}
h \,  \sqrt{j} & \text{ for } i=j+1 \\
h \, \sqrt{j-1} & \text{ for } i=j-1,
\end{cases} \\ \label{eq:kalman-c}
c_{ji} = (e_i, (\lambda(\cdot) -1) e_j) &= \begin{cases}
\lambda \, (2j-1) -1 & \text{ for } i=j \\
\lambda \, \sqrt{j(j+1)} & \text{ for } i=j+2 \\
\lambda \, \sqrt{(j-1)(j-2)} & \text{ for } i = j-2,
\end{cases}
\end{align}
and zero for all other cases.
\end{lemma}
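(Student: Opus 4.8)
The plan is to compute each of the three inner products $a_{ji}=(e_i,\ccL e_j)$, $b_{ji}=(e_i, h e_j)$, $c_{ji}=(e_i,(\lambda-1)e_j)$ directly from the definition of the Hermite basis in \eqref{eq:def-hermite-basis}, using the standard recurrence and differentiation identities for Hermite polynomials as the main engine. Recall that the probabilists' Hermite polynomials satisfy $f_i'(x)=i f_{i-1}(x)$ and the three-term recurrence $f_{i+1}(x)=x f_i(x)-i f_{i-1}(x)$, equivalently $x f_i(x)=f_{i+1}(x)+i f_{i-1}(x)$, together with the orthogonality relation $(f_i,f_j)_\phi:=\int_\R f_i f_j\,\phi\,dx=i!\,\ind{i=j}$ already recorded in the text. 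Since $e_i=\sqrt{\phi/(i-1)!}\,f_{i-1}$, every inner product $(e_i,g\,e_j)=\int_\R e_i(x)g(x)e_j(x)\,dx$ becomes $\frac{1}{\sqrt{(i-1)!\,(j-1)!}}\int_\R f_{i-1}(x)g(x)f_{j-1}(x)\phi(x)\,dx$, i.e.\ a weighted inner product against $\phi$ to which orthogonality applies directly once $g\,f_{j-1}$ is re-expanded in the Hermite basis. The whole computation thus reduces to expressing $x f_{j-1}$, $x^2 f_{j-1}$, and $\ccL e_j$ in terms of the $f_k$'s and reading off coefficients.

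First I would treat $b_{ji}$, which is the cleanest: here $g(x)=hx$, so using $x f_{j-1}=f_j+(j-1)f_{j-2}$ one gets $h\,\big(f_j+(j-1)f_{j-2}\big)$ paired against $f_{i-1}$; orthogonality forces $i-1\in\{j,j-2\}$, and after dividing by the normalizing factors $\sqrt{(i-1)!(j-1)!}$ one obtains $h\sqrt{j}$ when $i=j+1$ and $h\sqrt{j-1}$ when $i=j-1$, matching \eqref{eq:kalman-b}. Next I would handle $c_{ji}$ with $g(x)=\lambda x^2-1$: iterating the recurrence gives $x^2 f_{j-1}=f_{j+1}+(2j-1)f_{j-1}+(j-1)(j-2)f_{j-3}$, and the constant $-1$ contributes only on the diagonal via orthonormality of the $e_i$. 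Collecting the surviving indices $i-1\in\{j,j-2,j-2\}$ (from $f_{j+1},f_{j-1},f_{j-3}$ shifted by the $e$-index) and normalizing yields the diagonal $\lambda(2j-1)-1$, the $i=j+2$ entry $\lambda\sqrt{j(j+1)}$, and the $i=j-2$ entry $\lambda\sqrt{(j-1)(j-2)}$ of \eqref{eq:kalman-c}.

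The entry $a_{ji}=(e_i,\ccL e_j)$ is the most involved and I expect it to be the main obstacle, since $\ccL e_j=bx\,e_j'+\tfrac{\sigma^2}{2}e_j''$ requires differentiating $e_j=\sqrt{\phi/(j-1)!}\,f_{j-1}$, where both $\phi$ and $f_{j-1}$ depend on $x$; here the product rule interacts with $\phi'=-x\phi$ and $f_{j-1}'=(j-1)f_{j-2}$, producing several terms that must be carefully recombined. The cleanest route is to compute $\ccL e_j$ as a linear combination of $e_{j-2},e_j,e_{j+2}$ by repeatedly applying the recurrence and differentiation identities (the odd-shift terms cancel because $\ccL$ is a second-order operator acting on a Gaussian-weighted polynomial, leaving only even shifts), and then pair against $e_i$ using orthonormality. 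After bookkeeping of the $\sqrt{\cdot}$ normalization factors one reads off the diagonal value $-\tfrac{b}{2}+\tfrac{\sigma^2}{8}(1-2i)$ and the two off-diagonal values in \eqref{eq:kalman-a}; all remaining inner products vanish by orthogonality, giving the stated ``zero for all other cases.'' A useful consistency check throughout is that the claimed band structure (nonzero only for $|i-j|\in\{0,2\}$ in $A$ and $C$, and $|i-j|=1$ in $B$) is exactly what the degree count of $x\,\cdot\,$, $x^2\,\cdot\,$, and $\ccL$ predicts.
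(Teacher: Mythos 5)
Your proposal is correct and follows essentially the same route as the paper: establish the three-term recurrence and derivative identities for the Hermite basis, expand $hx\,e_j$, $(\lambda x^2-1)e_j$ and $\ccL e_j = bx\,e_j' + \tfrac{\sigma^2}{2}e_j''$ as combinations of nearby basis elements, and read off the coefficients by orthonormality (the paper packages these expansions, including $(e_j)'$, $x(e_j)'$ and $(e_j)''$, in a separate auxiliary lemma). The only blemish is a harmless index slip in your treatment of $c_{ji}$, where the surviving indices should read $i-1\in\{j+1,\,j-1,\,j-3\}$, i.e.\ $i\in\{j+2,\,j,\,j-2\}$, which is what your final values reflect.
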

We give the proof in the Appendix~\ref{app:Proofs}. In order to set up the Galerkin approximation one moreover needs to project $q_{0}$ on the subspace $H_n$  generated by the Hermite basis. This is done with some additional notation in Lemma~\ref{lemma:projection-of-initial-density} below.

We mention that explicit computation of $ a_{ji}=(e_i,\ccL e_j)$ is possible also for other state processes with different generator such as a  CIR process.
\end{example}

\paragraph{Computation of moments.}
Recall from \eqref{eq:CM-Galerkin} that in order to compute mean and variance of the
filter distribution via Galerkin approximation one needs to determine be integrals
$(x^j,e_i)$. For the Hermite basis this can be done analytically. In order to present the corresponding formulae we introduce some additional notation.
The $i$-th Hermite function is an polynomial of order $i$, and we  denote by
$\vartheta^i_0, \dots,\vartheta^i_i$  the coefficients in the representation $f_i(x)=\sum_{k=0}^i \vartheta^i_k x^k.$
Conversely,  any power of $x$ can be represented as linear combination of Hermite
polynomials and we write
  $x^i = \sum_{k=0}^i \iota^i_k f_k(x).$
Now we have
\begin{lemma} \label{lemma:hermite-moments} For the Hermite basis it holds that
\begin{align}\label{eq:Galerkin-hermite-r}
 ( x^j, e_i)  = \frac{\sqrt{2}(2\pi)^{\frac{1}{4}}}{\sqrt{(i-1)!}}\sum_{k=0}^{i-1} \vartheta^{i-1}_k  2^{\frac{k}{2}}  \iota^{k+j}_0, \quad j= 0,1,\dots
 \end{align}\label{lem:4.6}
\end{lemma}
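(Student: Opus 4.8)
The plan is to evaluate the inner product $(x^j,e_i)=\int_{\R} x^j e_i(x)\,dx$ directly from the definition of the Hermite basis in \eqref{eq:def-hermite-basis}. First I would substitute $e_i(x)=\sqrt{\phi(x)/(i-1)!}\,f_{i-1}(x)$ and expand the Hermite polynomial into monomials via its coefficients, $f_{i-1}(x)=\sum_{k=0}^{i-1}\vartheta^{i-1}_k x^k$. Since $\sqrt{\phi(x)}=(2\pi)^{-1/4}e^{-x^2/4}$, pulling the finite sum out of the integral reduces the entire computation to the family of Gaussian-type integrals $\int_{\R} x^{k+j}\,e^{-x^2/4}\,dx$ for $0\le k\le i-1$, each weighted by $\vartheta^{i-1}_k$ and by the common prefactor $(2\pi)^{-1/4}/\sqrt{(i-1)!}$.

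The key step is the evaluation of $\int_{\R} x^m e^{-x^2/4}\,dx$ with $m=k+j$. The weight $e^{-x^2/4}$ is, up to normalization, the density of a centered Gaussian with variance $2$, so the rescaling $x=\sqrt 2\,u$ transforms this integral into $2^{m/2}\sqrt 2\int_{\R}u^m e^{-u^2/2}\,du$, that is, a fixed constant times the $m$-th moment of the standard normal law $\cN(0,1)$.

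The final ingredient is to identify that moment with the coefficient $\iota^m_0$. By definition $x^m=\sum_{\ell=0}^m\iota^m_\ell f_\ell(x)$, and specializing the orthogonality relation $\int_{\R}f_i(x)f_j(x)\phi(x)\,dx=i!\,\ind{i=j}$ to $j=0$ (with $f_0\equiv 1$) gives $\int_{\R}f_\ell(x)\phi(x)\,dx=\ind{\ell=0}$. Integrating the expansion of $x^m$ against $\phi$ therefore collapses to the single surviving term $\iota^m_0$, which is exactly the $m$-th moment of $\cN(0,1)$ since $\int_{\R}x^m\phi(x)\,dx$ is that moment by definition. Substituting $m=k+j$ back into the finite sum and collecting the constants $(2\pi)^{-1/4}$, $(2\pi)^{1/2}$ and the powers of $2$ arising from the rescaling then assembles \eqref{eq:Galerkin-hermite-r}.

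I expect no analytic obstacle: all integrals are finite because the $\vartheta^{i-1}_k$ are finitely many explicit constants and the Gaussian weight guarantees absolute integrability, so the interchange of the finite sum with the integral is immediate. The only genuinely delicate point is the bookkeeping of the normalizing constants through the change of variables $x=\sqrt 2\,u$ — tracking the factor $(2\pi)^{1/4}$ together with the accumulated powers of $2$ — which is routine but the natural place for sign-of-exponent errors to creep in.
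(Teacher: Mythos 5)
Your proposal is methodologically correct and is essentially the paper's own argument. The paper proves the case $j=0$ by substituting $x=\sqrt{2}u$ first and then expanding $f_i(\sqrt{2}u)=\sum_k\vartheta^i_k 2^{k/2}u^k$, re-expanding each monomial in Hermite polynomials via the coefficients $\iota^m_\ell$, and using orthogonality against $f_0\equiv 1$ so that only $\iota^m_0$ survives; you expand into monomials first and then rescale each Gaussian integral, which is the same computation in a different order, with the same three ingredients (the $\vartheta$-expansion, the $\iota$-expansion, orthogonality against $f_0$). The paper then simply declares the general case ($1$ replaced by $x^j$) analogous.

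One substantive point, located exactly at the spot you yourself flag as delicate: tracking the powers of $2$ honestly does \emph{not} reproduce the printed display. The rescaling $x=\sqrt{2}u$ applied to $x^{k+j}$ produces $2^{(k+j)/2}$, so both your argument and the paper's (when actually carried out for $j\ge 1$) yield
\begin{align*}
( x^j, e_i)  = \frac{\sqrt{2}(2\pi)^{\frac{1}{4}}}{\sqrt{(i-1)!}}\sum_{k=0}^{i-1} \vartheta^{i-1}_k\, 2^{\frac{k+j}{2}}\,  \iota^{k+j}_0,
\end{align*}
that is, the statement as printed is missing a factor $2^{j/2}$ and is correct only for $j=0$, the one case the paper computes explicitly. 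A quick sanity check: for $i=1$, $j=2$ one has $e_1(x)=(2\pi)^{-1/4}e^{-x^2/4}$ and $(x^2,e_1)=(2\pi)^{-1/4}\int_{\R}x^2e^{-x^2/4}\,dx=2\sqrt{2}\,(2\pi)^{1/4}$, which is twice the value given by \eqref{eq:Galerkin-hermite-r} (there $\vartheta^0_0=1$, $\iota^2_0=1$). So your method is sound, but your final claim --- that collecting the constants ``assembles'' the printed formula --- cannot be literally true: carried out correctly, your computation proves the corrected formula above and exposes a typo in the lemma rather than an error in your approach.
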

The proof of this lemma is given in Appendix~\ref{app:Proofs}.

\begin{example}[Example~\ref{exam:linear_model} continued] Using the above notation we may also  compute the projection of the initial density for the case of the Kalman filter with point process observation:
 \begin{lemma}\label{lemma:projection-of-initial-density}
Set $a:= \frac{2\mu_0}{2+\sigma^2}$, $b:= \sqrt{\frac{2\sigma^2}{2+\sigma^2}}$ and $d:= \frac{4 \mu_0^2}{8 \sigma^4}- \frac{\mu_0}{2\sigma^2}.$ Then
\begin{align*}
    ( q_0, e_j ) =&\frac{1}{\sqrt{ \sigma_0^2 \, (j-1)!}} \frac{e^d}{(2\pi)^{-1/4}} \sum_{m=0}^{j-1} \sum_{k=m}^{j-1}  \vartheta^{j-1}_k \, {k \choose m} \, a^{k-m} \, b^m \iota_0^m.
\end{align*}
\end{lemma}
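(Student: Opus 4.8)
The plan is to evaluate the scalar product $(q_0,e_j)=\int_\R q_0(x)\,e_j(x)\,dx$ directly, reducing everything to Gaussian moments. First I would insert the explicit forms: $q_0$ is the $\cN(\mu_0,\sigma_0^2)$-density, and by \eqref{eq:def-hermite-basis}, together with $\sqrt{\phi(x)}=(2\pi)^{-1/4}e^{-x^2/4}$, one has $e_j(x)=(2\pi)^{-1/4}\,((j-1)!)^{-1/2}\,e^{-x^2/4}\,f_{j-1}(x)$. The integrand thus carries the product of two Gaussian kernels, $\exp\big(-(x-\mu_0)^2/(2\sigma_0^2)\big)\cdot\exp(-x^2/4)$, times the polynomial $f_{j-1}$.

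The first genuine step is to merge these two kernels into a single Gaussian by completing the square in the exponent $-\tfrac{(x-\mu_0)^2}{2\sigma_0^2}-\tfrac{x^2}{4}$. Writing it as $-\tfrac{1}{2b^2}(x-a)^2+d$ fixes the new mean $a=\tfrac{2\mu_0}{2+\sigma_0^2}$, the new scale $b=\big(2\sigma_0^2/(2+\sigma_0^2)\big)^{1/2}$, and an $x$-independent remainder, which is the constant $d$ of the statement (where $\sigma^2$ there stands for the variance $\sigma_0^2$ of $X_0$). This step is routine but it is where all the powers of $2\pi$ and the $\sigma_0$-factors must be tracked, and I expect it to be the only place requiring real care.

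With the kernels merged I would expand the Hermite polynomial in monomials, $f_{j-1}(x)=\sum_{k=0}^{j-1}\vartheta^{j-1}_k x^k$, so that the integral becomes a weighted sum of the moments $\int_\R x^k\,e^{-(x-a)^2/(2b^2)}\,dx$. Expanding $x^k=((x-a)+a)^k$ by the binomial theorem and substituting $u=(x-a)/b$ reduces each such moment to $\sqrt{2\pi}\,b\sum_{m=0}^{k}\binom{k}{m}a^{k-m}b^m\big(\int_\R u^m\phi(u)\,du\big)$. The key identification is that $\int_\R u^m\phi(u)\,du=\iota^m_0$: multiplying the defining relation $x^m=\sum_{k}\iota^m_k f_k(x)$ by $f_0\phi\equiv\phi$ and using $\int f_i f_j\phi\,dx=i!\,\ind{i=j}$ kills every term except $k=0$, giving $\int_\R x^m\phi\,dx=\iota^m_0$.

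Finally I would collect the constants — the surviving power of $2\pi$, the factor $1/(\sigma_0\sqrt{(j-1)!})$ and the $e^d$ from the square-completion — interchange the order of the $k$- and $m$-sums (so that $m$ runs from $0$ to $j-1$ and $k$ from $m$ to $j-1$), and read off the stated double sum. The only real obstacle is the careful bookkeeping of the normalising constants in the square-completion step; the rest is the binomial expansion together with the orthogonality identity for $\iota^m_0$, both mechanical.
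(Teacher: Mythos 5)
Your proposal is correct and takes essentially the same approach as the paper's own proof: complete the square to merge the two Gaussian kernels (with the same $a$, $b$, $d$), expand binomially so that everything reduces to central Gaussian moments, and identify $\int_\R u^m\phi(u)\,du=\iota^m_0$ by orthogonality against $f_0$ --- the paper merely packages the expansion step as an identity $f_{j-1}(a+bx)=\sum_{r} A_r(j-1,a,b)f_r(x)$ for the shifted Hermite polynomial before integrating, which yields the identical double sum. One remark: your moment formula correctly carries the Jacobian factor $\sqrt{2\pi}\,b$ from the substitution $u=(x-a)/b$, whereas the paper's proof silently drops the factor $b$ in the step $\tfrac{1}{\sqrt{2\pi}}\int e^{-(x-a)^2/(2b^2)}f_{j-1}(x)\,dx=\int\phi(x)f_{j-1}(bx+a)\,dx$, so your more careful bookkeeping would actually end with an extra factor of $b$ in front of the stated double sum.
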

We give the proof in  Appendix~\ref{app:Proofs}.
\end{example}


%

\subsection{The adaptive  Galerkin approximation}\label{section:The Adaptive  Galerkin Approximation}

During the filtering process the conditional distribution $\pi_t(dx)$ typically changes
location and scale. This can create problems for the Galerkin approximation with a fixed
basis. For instance, the graphs in Figure \ref{fig:error-v} show that while the standard
Hermite polynomials do approximate the density of a normal distribution well if the mean
is close to zero and if the variance $\sigma^2$ lies between one and four, the fit becomes
substantially worse if $\mu$ is substantially different from zero or if $\sigma^2$ is
 outside of the interval $[1,4]$. Hence we propose an adaptive scheme, called \emph{adaptive
Galerkin approximation} (AGA), which improves the numerical performance of the Galerkin approach significantly.

\begin{figure}
\begin{center}
    \begin{overpic}[width=13cm]{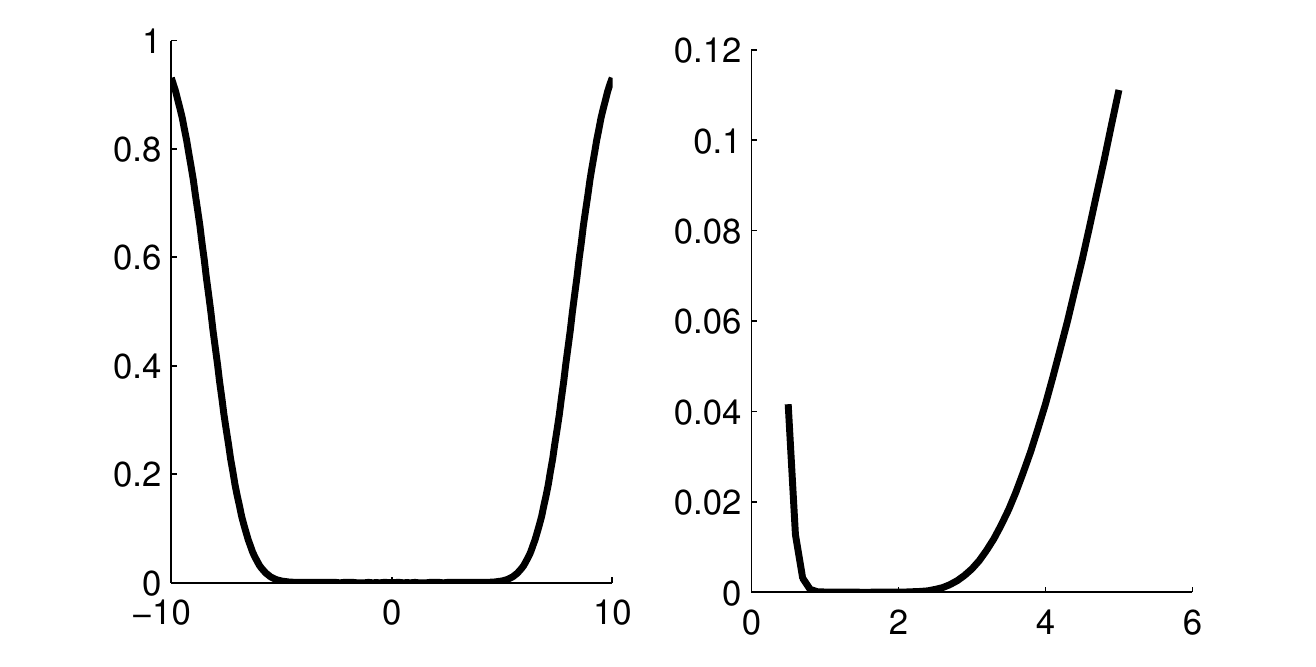}
		\put(10,50){$d$}
		\put(54,50){$d$}
		\put(49,3){$\mu$}
		\put(93,2.5){$\sigma$}
	\end{overpic}
\parbox{14cm}{\caption{\label{fig:error-v} \footnotesize
Comparison of the density $p$ of a normal distribution with mean $\mu$ and variance $\sigma^2$
with its approximation $\hat{p}=\sum_{i=1}^{n} ( p, e_i ) e_i$ for different
choices of $\mu$ and $\sigma$ with $n=20$: the graphs show the distance
$d:=( \int (\hat{p}-p)^2 dx)^{1/2}$ as function of $\mu$ and as function of $\sigma$
with fixed $\sigma=\sqrt{2}$ (left) and $\mu=0$ (right). The approximation is bad if
$\mu \not \in (-5,5)$  (left) or $\sigma \not \in (0.9,2)$ (right). The adaptive Galerkin
method overcomes this difficulty.}}
 \end{center}
\end{figure}

Assume for simplicity that $l=1$ and that the basis $\{e_i\}\subset D(\mathcal{A}^*) $ of
$H$ consists of orthonormal functions. We consider the equidistant time discretisation
given by $t_k=kT/K$, $k=0,\dots,K$. The standard Galerkin approximation computes
$\Upsilon_k$ at each time $t_k$. The AGA additionally adapts the location  $\mu_k$ and
the scale $\sigma_k>0$ of the basis by choosing appropriate values for these parameters
at every time step. Hence  the method works with the adapted basis $\{e_1^k, e_2^k,
\dots\}$ given by
\begin{align}\label{eq:adapted-basis}
{e}_i^k(x) :=\frac{1}{\sqrt{\sigma_k}}e_i\Big(\frac{x-\mu_k}{\sigma_k}\Big), \quad x\in \R.
\end{align}
Similar to \eqref{eq:GC-a}, we denote by $A^k$, $B^k$, $C^k$ and $D^k$ the matrices given
by
\begin{align}\label{eq:ABCG}
 a_{ji}^k=( \e^k_i,\mathcal{A}\e^k_j),\quad
 b_{ji}^k=( \e^k_i, h\e^k_j) ,\quad
 c_{ji}^k=(\e^k_i, (\lambda-1)\e^k_j), \quad
 d_{ji}^k =(\e^k_i, \e^k_j).
\end{align}
In algorithmic form the adaptive Galerkin approximation can be described as follows:

\begin{algorithm}[AGA]\label{algo:AGA}

\textbf{1. Initialization}:
\begin{itemize}
\item[i)] Set $\mu_0$ and $\sigma_0$ using the initial density: $\mu_0=\int
    xp_0(x)dx$ and $\sigma_0 = \big(\int (x -\mu_0)^2 p_0(x)dx \big)^{1/2}$, and
    define the basis functions $e_i^0 $, $1 \le i \le n$, as in
    \eqref{eq:adapted-basis}.
\item[ii)]  Compute $A^0$, $B^0$, $C^0$ and $D^0$ according to \eqref{eq:ABCG}.
\item[iii)] Compute $\U_0=(\psi_{0,1},\dots,\psi_{0,n})$ by
    \eqref{eq:approximation-qn}: $\psi_{0,i}=( p_0, {e}^0_i)$.
\end{itemize}

\pagebreak

\noindent\textbf{2. Iteration}: For $k=1,\dots , K$ do the following steps.
\begin{itemize}
\item[i)]  Compute $\tilde \U_k = (\tilde \psi_{k,1}, \dots, \tilde
    \psi_{k,n})^\prime $ from $\U_{k-1}$ applying Algorithm \ref{algo:EM} or
    \ref{algo:splitting-up}, using the basis functions $e_i^{k-1}$, $1 \le i \le n$.
\item [ii)] Compute the following estimates of the conditional mean and standard
    deviation:
$$
\hat{x}_{k}^{(n)}=\frac{\sum_{i=1}^{n}\tilde \psi_{k,i} (x,e_i^{k-1}) }
  {\sum_{i=1}^{n}\tilde \psi_{k,i}^{(n)}({1},e_i^{k-1})} \quad  \text{ and }  \quad
  \hat \sigma_{k}^{(n)} =
\bigg ( \frac{\sum_{i=1}^{n}\tilde \psi_{k,i} (x^2,e_i^{k-1}) }
  {\sum_{i=1}^{n}\tilde \psi_{k,i} ({1},e_i^{k-1})} - (\hat{x}_{k}^{(n)})^2 \bigg )^{\frac{1}{2}}.
$$

If $|\hat x^{(n)}_k - \mu_{k-1}|$ and $|\hat \sigma^{(n)}_k- \sigma_{k-1}|$ are
    smaller than a given threshold, set $\U_k = \tilde \U_k$,  $\mu_k = \mu_{k-1}$,
    $\sigma_k = \sigma_{k-1}$, $e_i^k := e_i^{k-1}$, $1 \le i \le n$. Let $k=k+1$ and
    continue with the iteration (Step~2).
\item [iii)] Otherwise do a \emph{transition of the basis} as follows: let $\mu_k :=
    \hat x^{(n)}_k$, $ \sigma_k := \hat \sigma_{k}^{(n)} $, define the new basis
    functions  as in \eqref{eq:adapted-basis} and compute the matrices $A^k$, $B^k$,
    $C^k$ and $D^k$ according to \eqref{eq:ABCG}. Finally, compute $\U_k$ by
    projecting $\tilde q_{t_k}  = \sum_{i=1}^n \tilde \psi_{k,i} e_i^{k-1}$ on the
    new basis: let $\psi_{k,i} = ( \tilde q_{t_k},\, e_i^k )$ and set $\U_k =
    (\psi_{k,1}, \dots , \psi_{k,n})^\prime$. Let $k=k+1$  and continue with
  Step~2.
\end{itemize}
\end{algorithm}

The AGA provides better results compared to the standard Galerkin approximation (see the
numerical experiments in Section~\ref{Simulation Results} below) while it is typically
more time consuming since the coefficient matrices in  \eqref{eq:ABCG} need to be
recomputed at every transition of the basis. However, in the AGA with respect to the
Hermite basis the corresponding terms can be computed explicitly which leads to an
efficient implementation of the AGA. In particular,
when the  coefficients $b$, $\sigma^2$, $h$ and $\lambda$ are
of polynomial type,  the corresponding coefficients can be computed explicitly with the
aid of Lemma \ref{lem:4.6}.

\subsection{The multi-dimensional case}\label{sec:multi}

In this section we shortly sketch the extension to the multi-dimensional case. For an introduction of multi-dimensional Hermite
polynomials, we refer to \citeN{bib:Berkowitz-Garner}.
Here, we proceed by the following method: let $\{e_1,e_2,\dots\}$
denote the Hermite bases defined in
Equation \eqref{eq:def-hermite-basis}. This constitutes a  basis of $L^2(\R)$. Hence,
\begin{align*}
\Big\{e_{i_1}\otimes e_{i_2} \otimes\cdots \otimes e_{i_d}: \,
i_1,i_2,\ldots i_d \in \N \Big\}
\end{align*}
is a Hilbert basis of $L^2(\R^d)$, where the tensor product is defined by $(e \otimes f)(x_1,x_2) :=e(x_1) f(x_2)$.
We choose the following  $m=n^d$-dimensional subspace {$H_m$}
\begin{align*}
H_m=\text{span}  \Big\{e_{i_1}\otimes e_{i_2} \otimes\cdots \otimes
e_{i_d}: \, i_1,i_2,\ldots i_d \in \{1,\ldots,n\} \Big\}.
\end{align*}
{From now on the (adaptive) Galerkin approximation works similarly as in the one-dimensional case, compare for instance  Algorithm \ref{algo:AGA}.}

\section{Numerical  Experiments}\label{Simulation Results}

In this section we present results from a number of numerical case studies. The aim is
to assess the performance of the Galerkin approximation relative to other methods (mostly
particle filters)  and to illustrate various practical aspects of the method such as the pros and cons of different basis functions and discretization schemes.

\paragraph{General description.} The basic
setup of each numerical experiment is as follows. In Step~1 a trajectory $x = (x_{t})_{0
\le t \le T}$ of the signal process \eqref{eq:state}  was generated using the
Euler-Maryuama method. In Step~2 we generated  for the given trajectory $x$ from Step~1 a
trajectory $z$ of the continuous observation \eqref{eq:Z} and a trajectory $n$ of the
point process observation $N$. In Step~3 various variants of the Galerkin approximation
were used to solve the corresponding Zakai equation for the conditional filter density.
For comparison purposes the filter problem was also solved using a particle filter.

The performance of the  numerical filtering algorithms was  assessed in different ways:
\begin{itemize}
\item By design, the mean $\hat x_t$ of the filter distribution at time $t$ minimizes
    the ${L}^2$-distance between the unobserved state $X_t(\omega)$ and the space
    $L^2(\Omega,\cF_t^{Z,N},\P)$. This suggests the following performance criterion:
    Generate $m$ independent trajectories $x^j, z^j, n^j$, $1 \le j \le m$ and solve
    numerically the ensuing filter problem for discrete time points
    $t_1,\ldots,t_K$. Compute  the so-called  root mean square error
    (abbreviated RMSE) given by
     \begin{align*}
\operatorname{RMSE}=\Big ( \frac{1}{mK}\sum_{j=1}^{m} \sum_{k=1}^{K} \|
X^j(t_k)-\hat{x}^j(t_k)\|^2\Big )^{\frac{1}{2}},
\end{align*}
Obviously, a filtering method that leads to a smaller RMSE can be considered to be
more accurate.
\item We can plot individual trajectories $x$,  $\hat{x}$ and $\hat \sigma^2$ of the signal, and of the  conditional mean and  variance of the filter distribution. This
    allows for a pathwise comparison of different numerical methods.
\item Finally, in some cases (e.g. Kalman-filtering for linear Gaussian models) the
    filter density is known explicitly. In those cases we can compare the
    filter density $p_{t}(\cdot)$ and the approximation $p^{(n)}_{t}(\cdot)$ that is
    obtained by normalizing the  numerical solution of the Zakai equation (see
    \eqref{eq:aga-density}).
\end{itemize}

Our numerical experiments with a one-dimensional signal process  use the setup of
Example~\ref{exam:linear_model}. In this case   $X$ is a one-dimensional
Ornstein-Uhlenbeck process with mean-reversion parameter $b$ and volatility $\sigma$ and $N$ is a doubly stochastic Poisson process with intensity $\lambda(X_t)$. The functions
$h(\cdot)$ and $\lambda(\cdot)$ are of the form  $h(x) =  h x$ and $\lambda(x) =
 \lambda x^2$. The parameter
values are  as follows: $b=0.5$, $\sigma=2$, and the initial distribution of $X$ is
normal with $\mu=5$, $\sigma^2 =0.01$. Unless stated otherwise we took a time step
$\Delta =10^{-6}$ (essentially continuous observations). The values of $h$ and
$ \lambda$ vary with the experiments and are hence given in the captions of the
graphs and tables.

We also considered the case of a multidimensional signal process of dimension $d=5$. We
assumed that the signal process has dynamics $ dX_t = b X_t dt + \sigma d V_t$ for a
$3$-dimensional Brownian motion $V$. The observation process is three-dimensional and
$h(x) = \tilde h x$. The matrices $b$, $\sigma$ and $\tilde h$ are as follows:
\begin{align*} {\tiny
b=\left(
  \begin{array}{rrrrr}
    1   &  0 &     0  &   1 &    0\\
     1   &  1 &   -1   &  0 &    1\\
     0    & 1  &  -1   & -1 &   -1\\
     0    &-1   & -1   &  1 &    1\\
     1    &-1    & 0   &  0 &    1\\
  \end{array}
\right), \,\,\sigma=\left(
  \begin{array}{ccccc}
    1 &    0   &  1 \\
    2 &    1  &   1 \\
    1 &    1  &   1 \\
    1 &    1  &   1 \\
    0 &    0 &    1 \\
  \end{array}
\right),\,\, \tilde h=\left(
  \begin{array}{ccccc}
    0.2  &  0.3  &  0.2  &  0.3  &  0.4\\
    0.2  &  0.1  &  0.2  &  0.1  &  0.2\\
    0.2  &  0.2  &  0.4  &  0.2  &  0.2\\
  \end{array}
\right), }
\end{align*}
and   $N$ is a one-dimensional doubly stochastic Poisson process with intensity $0.1
(X^1_t)^2+0.2 (X^2_t)^2+0.3(X^3_t)^2+0.1 (X^4_t)^2 +0.1 (X^5_t)^2$. The
basis functions were chosen as described in Section \ref{sec:multi}.

\paragraph{Results.} In the following we summarize the key findings from our numerical
experiments; the outcome of each experiment is described in detail in the captions of Figures  \ref{fig:estimation-linear-strong2}  -- Figure \ref{fig:estimation-multi} below.
\begin{enumerate}
\item For a one-dimensional state variable process the adaptive Galerkin
    approximation performs very well: given a sufficient number of basis functions
    the precision is equal to the precision of a particle filter, but the computation
    time is significantly lower. This can be seen from inspection of
    Table~\ref{tab:case-linear-1}, where we give the RMSE and the computation time
    for various filtering algorithms and parameter values. The performance of the
    Galerkin approximation and the additional value for the estimation of $X$ obtained by  incorporating the observation of the point process $N$ is further illustrated in
    Figure~\ref{fig:estimation-linear-strong2}. In
    Figure~\ref{fig:estimation-linear-KBt1} we consider  the special case of the
    Kalman-Bucy filter (no point process observations). Here the filter density is
    known explicitly and we can compare the approximation obtained via Galerkin
    approximation to the correct density. The figure clearly shows that the Galerkin
    approximation provides a good approximation to the overall density (and not just
    to the conditional mean $\hat x_t$).
\item The adaptive Galerkin method can bring a substantial performance
    enhancement if we consider examples with small observation noise and hence with
    rapidly moving scale and location of the filter distribution. This is 
    illustrated in Figure~\ref{fig:estimation-linear-nb}.
\item While computationally more involved, the splitting-up approximation is
    significantly more accurate and stable if the time-discretization step $\Delta$ is moderately
    large, as is clearly shown in Figure~\ref{fig:estimation-SU-vs-Euler}. At this
    point we would like to stress that in many applications of filtering, observations
    arrive at discrete time points such as daily observations, and that one resorts
    to continuous-time filtering methods merely for convenience. This implies that
    $\Delta$ cannot be freely chosen by the analyst and it is important to have
    numerical methods that are robust with respect to  the choice of $\Delta$.
\item Figure~\ref{fig:estimation-multi} for the case $d=5$ indicates that the
    Galerkin approximation works reasonably well also for a higher dimensional signal
    process. However, the number of basis functions increases exponentially in $d$
    (at least for the basis chosen as in Section \ref{sec:multi}). It would be
    interesting to see if a further performance enhancement is possible if we choose
    a different basis, but this is left for further research.
\end{enumerate}

\paragraph{Comparison of the Gaussian and the Hermite basis functions.} Finally we briefly discuss the pros and cons of Gaussian and  Hermite basis functions.

The main advantage of the Hermite basis is clearly the fact that the $e_i$ form an orthonormal system. This facilitates the numerical implementation of the method, as there is no need to invert the matrix $D$ introduced in  \eqref{eq:GC-a}. Moreover, the orthogonality of the $e_i$ implies that the subspace spanned by the first $n$ Hermite basis function is in some sense `larger' than the subspace spanned by the first $n$ Gaussian basis functions, so that it is possible to obtain a similar level of accuracy with a smaller size of the basis. Table~\ref{tab:simulations} shows that, at least for the case of  the Kalman filter with point process observations, these advantages are not only theoretical: in order to reach a similar level of accuracy the methods based on Hermite basis functions require a  substantially lower computational time than the algorithm with Gaussian basis functions. A second advantage of Hermite polynomials is the fact that for certain models it is possible to compute the coefficient matrices for the Galerkin approximation analytically, as was illustrated in Example~\ref{exam:linear_model}. This is particularly useful if one wants to use some form of the adaptive Galerkin approximation, since in that case the coefficient matrices need to be recomputed during the filtering procedure.

As mentioned in \citeN{bib:ahmed-radaideh-97}, the use of Hermite basis functions might in principle lead to negative values for the conditional density which is clearly undesirable. To this we mention that we never encountered the problem in our numerical experiments unless we took the number of basis functions extremely small. But  we readily admit that in order to exclude the possibility of negative filter densities a priori,  one has to resort to a set of nonnegative basis functions such as the Gaussian basis proposed by \citeN{bib:ahmed-radaideh-97}, even if this might  lead to a higher computational effort.

\begin{table}
\begin{center}
\begin{tabular}[h]{llll}
\toprule
 $N_G$/$N_P$ & 5/20 & 10/50 &  15/100   \\\midrule
AGAH(EM)  &0.63  (0.1s) &   0.42 (0.1s) &  0.42  (0.1s) \\
AGAH(SU)  & 0.65 (2.4s) &   0.43 (3.1s) &  0.43  (3.9s) \\
 PF      & 0.46 (9s)   &   0.46  (22s) &  0.42  (46s)  \\
  \bottomrule
\end{tabular}

\vspace{4mm}

\parbox{14.8cm}{
\caption{\label{tab:case-linear-1} Performance comparison for different filter
algorithms: we plot the RMSE  and in brackets the computation time for two  Galerkin
filters and a particle filter.  Here $N_G$ represents the number of basis functions in
the Galerkin approximation and $N_P$ the number of particle in the particle filter.
AGAH(EM) respectively AGAH(SU) stands for the adaptive Galerkin approximation  with respect to the Hermite basis, where we used the Euler-Maruyama (EM) approximation and the  splitting-up approximation (SU) for Equation \eqref{eq:Upsilon-e}, respectively.
The parameter  values $h= \lambda =0.1$ used in the experiment correspond to a relatively uninformative observation filtration; in computing the RMSE we used $m=100$.
}}
\end{center}
\end{table}

\begin{figure}[t]
\begin{center}
\vspace{-1cm}
\parbox{18cm}
{
	\begin{overpic}[width=8cm,height=5.5cm]{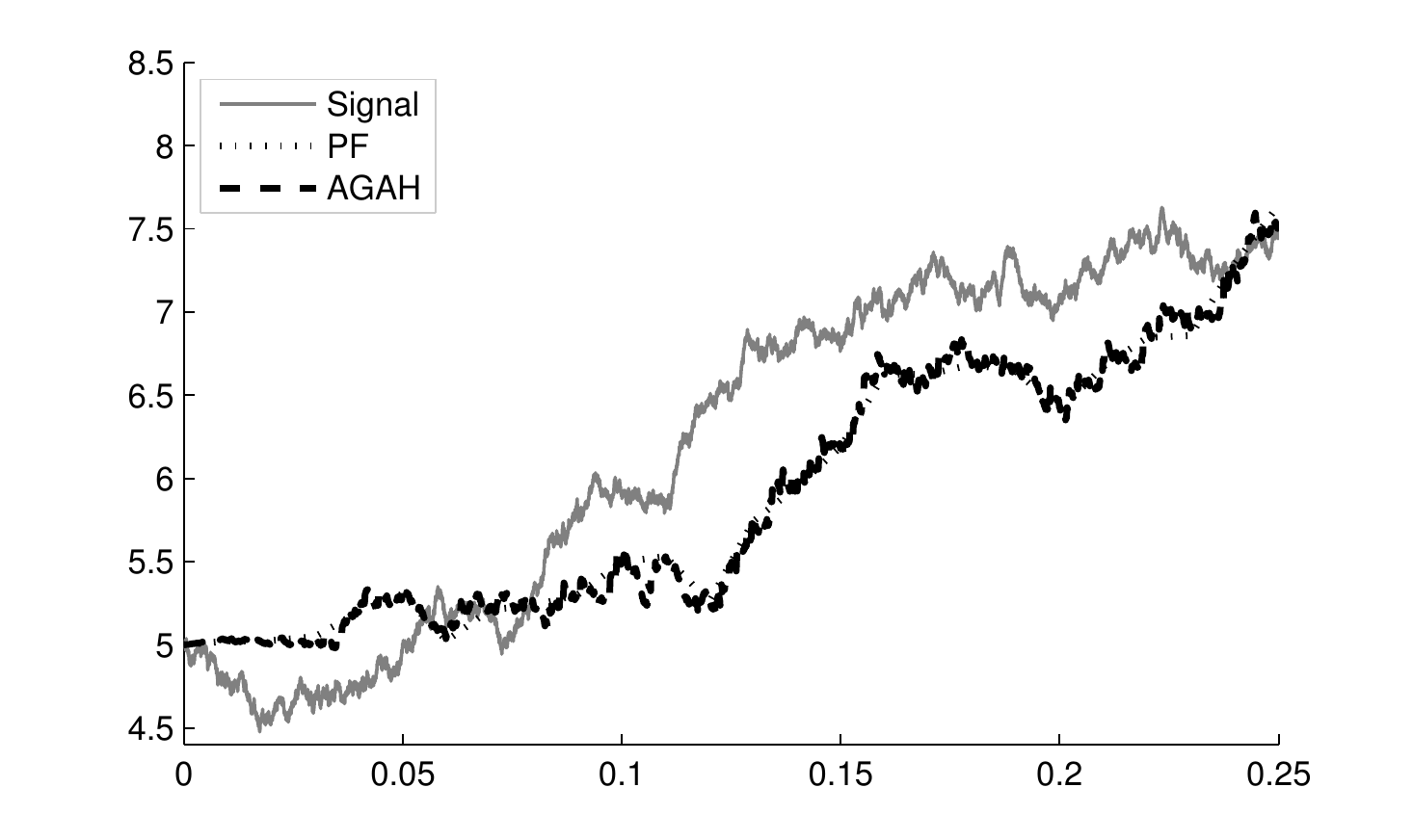}
		\put(8,67){$\hat x_t$}
		\put(94,3){$t$}
	\end{overpic}
	\begin{overpic}[width=8cm,height=5.5cm]{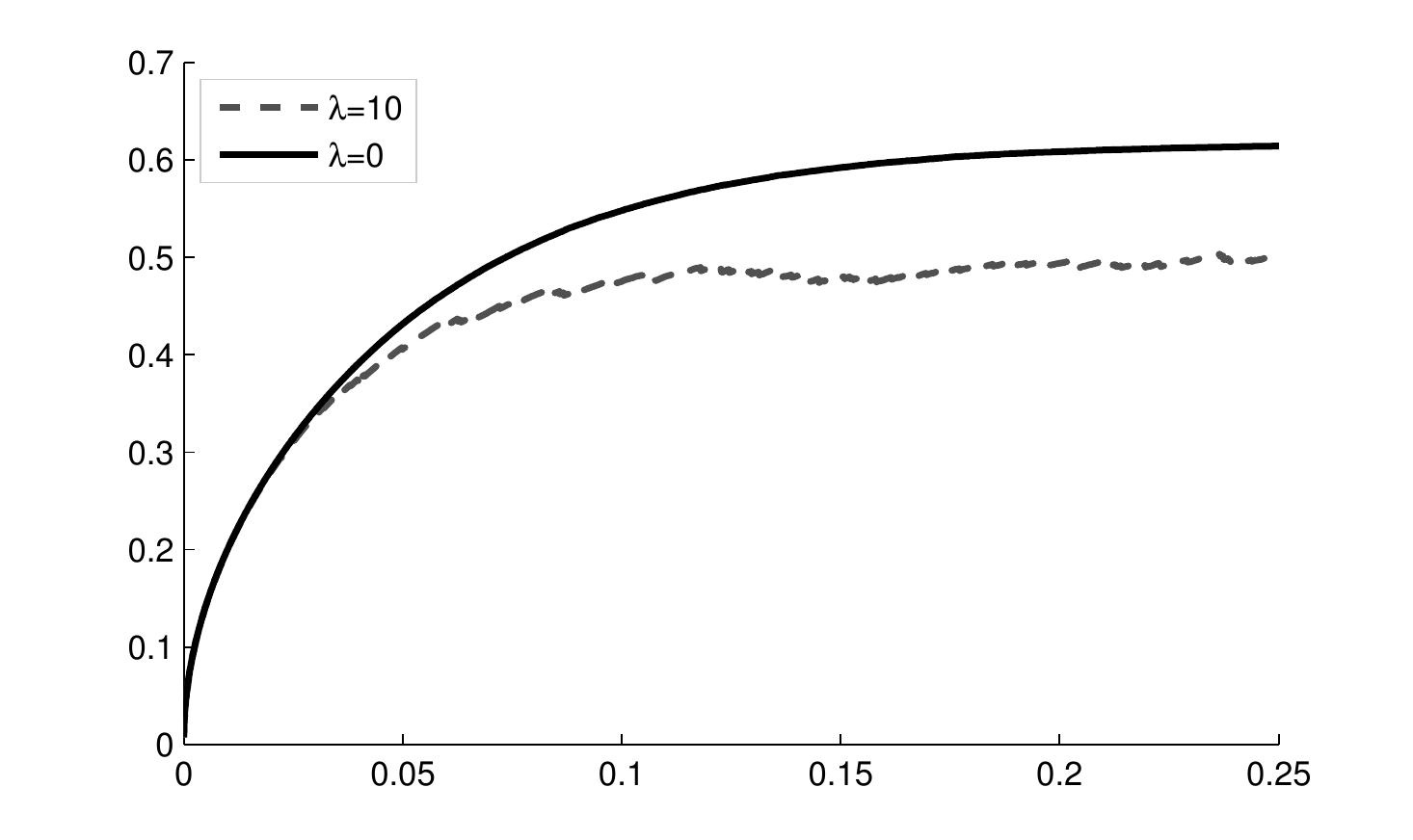}
		\put(8,67){$\hat \sigma_t$}
		\put(94,3){$t$}
	\end{overpic}
 }
 \parbox{15.8cm}{
\caption{\label{fig:estimation-linear-strong2}\emph{Illustration of filtering  and the value of point process information}. We choose  $ h=5.5$,    $ \lambda=10$.
\emph{Left:} trajectories of $X$ and of $\hat x$ using the Galerkin method and particle filtering. Both
methods perform well.
In the \emph{right} graph we illustrate the gain of using \emph{point process observations}:  we plot the trajectory of the conditional standard deviation $\hat \sigma_t$ for the case with only continuous observation $\lambda =0$ and with continuous and point process observations ($\lambda =10$, lower trajectory).
The approximation by the two methods are  very close in that case.
Clearly, including point process information reduces  the conditional
standard deviation  significantly in this example.  }}
\end{center}
\end{figure}

\begin{figure}[p.21]
\begin{center}
\begin{minipage}{18cm}\hspace{-1cm}
	\begin{overpic}[width=18cm,height=5.5cm]{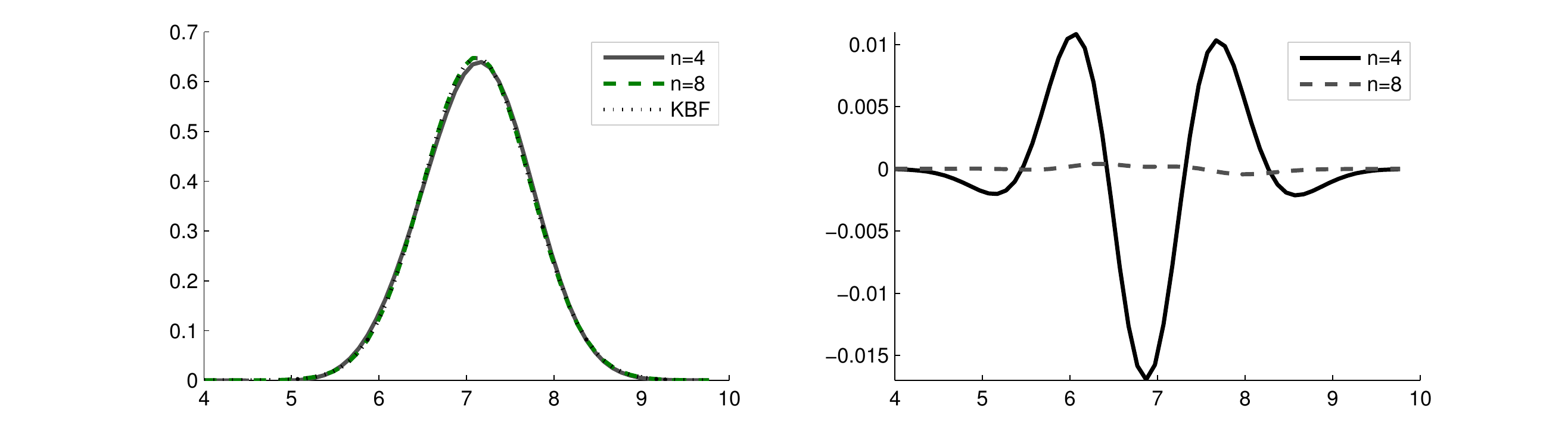}
		\put(91.5,1){$x$}
		\put(24,32){Filter densities}
		\put(68,32){Differences}
		\put(47.5,1){$x$}
	\end{overpic}
\end{minipage}
 \parbox{15.8cm}{
\caption{\label{fig:estimation-linear-KBt1}
\emph{Comparison of the  theoretical filter density  and the  AGAH.} We consider an $n$- dimensional  Hermite basis and the case of purely continuous
observations ($\lambda =0$). In this case, the filter problem has an explicit solution that can be computed with
the  Kalman-Bucy filter (KBF). \emph{Left:} filter densities; \emph{right:} approximation error.
The adaptive Galerkin approximation (AGAH)  is very close to the explicit solution for $n \ge 8$.   }}
\end{center}
\end{figure}

\begin{figure}\vspace{-1cm}
\begin{center}
\parbox{18cm}{
	\begin{overpic}[width=8cm,height=5.5cm]{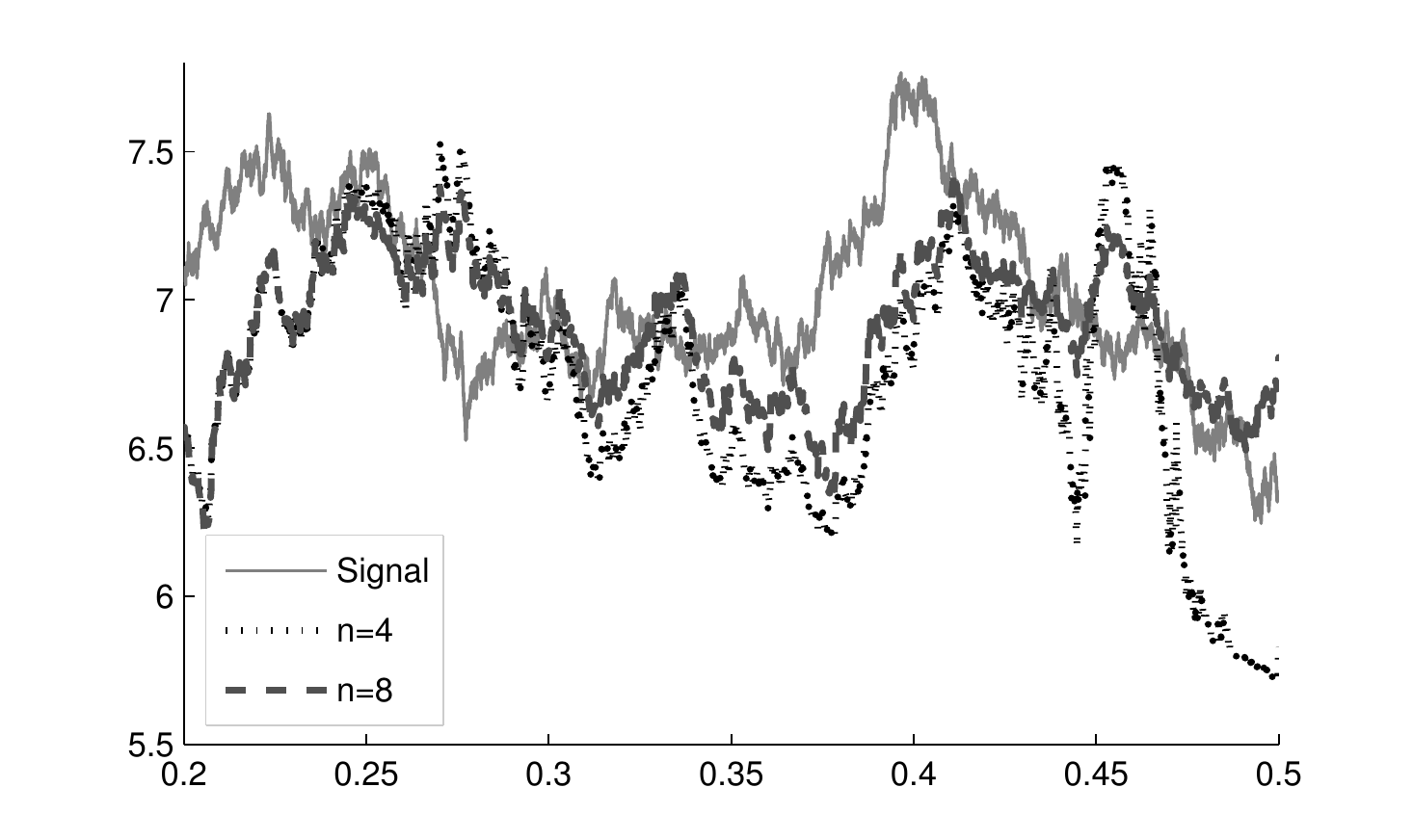}
		\put(8,67){$\hat x_t$}
		\put(94,3){$t$}
	\end{overpic}
	\begin{overpic}[width=8cm,height=5.5cm]{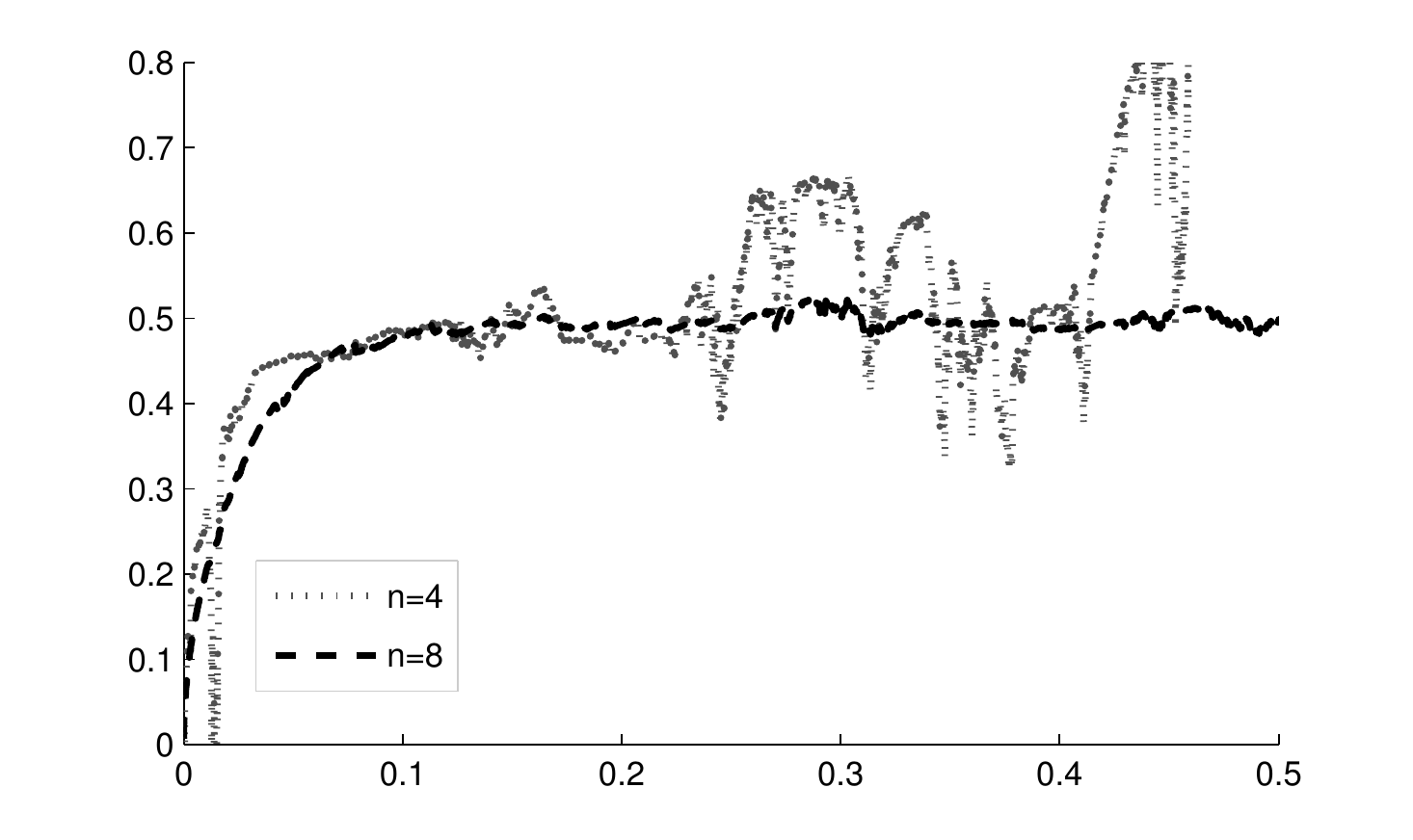}
		\put(8,67){$\hat \sigma_t$}
		\put(94,3){$t$}
	\end{overpic}
}
 \parbox{15.8cm}{\caption{\label{fig:estimation-linear-n} Filter estimate (left) and conditional standard deviation (right)
for a varying number $n$ of Hermite basis functions in the adaptive Galerkin approximation (AGAH).
Here $h=5.5$, $\lambda=10$ and we use the  AGAH  with $n=4$ and $8$  basis
functions.  The case $n=4$ shows a bad performance; the filter with $8$ basis functions performs reasonably well.
The right plot indicates that plots of the conditional variance
can be a useful tool for determining if the number of basis functions used is  appropriate.}}
\end{center}
\end{figure}

\begin{figure}
\begin{center}
 \parbox{18cm}{
	\begin{overpic}[width=8cm,height=5.5cm]{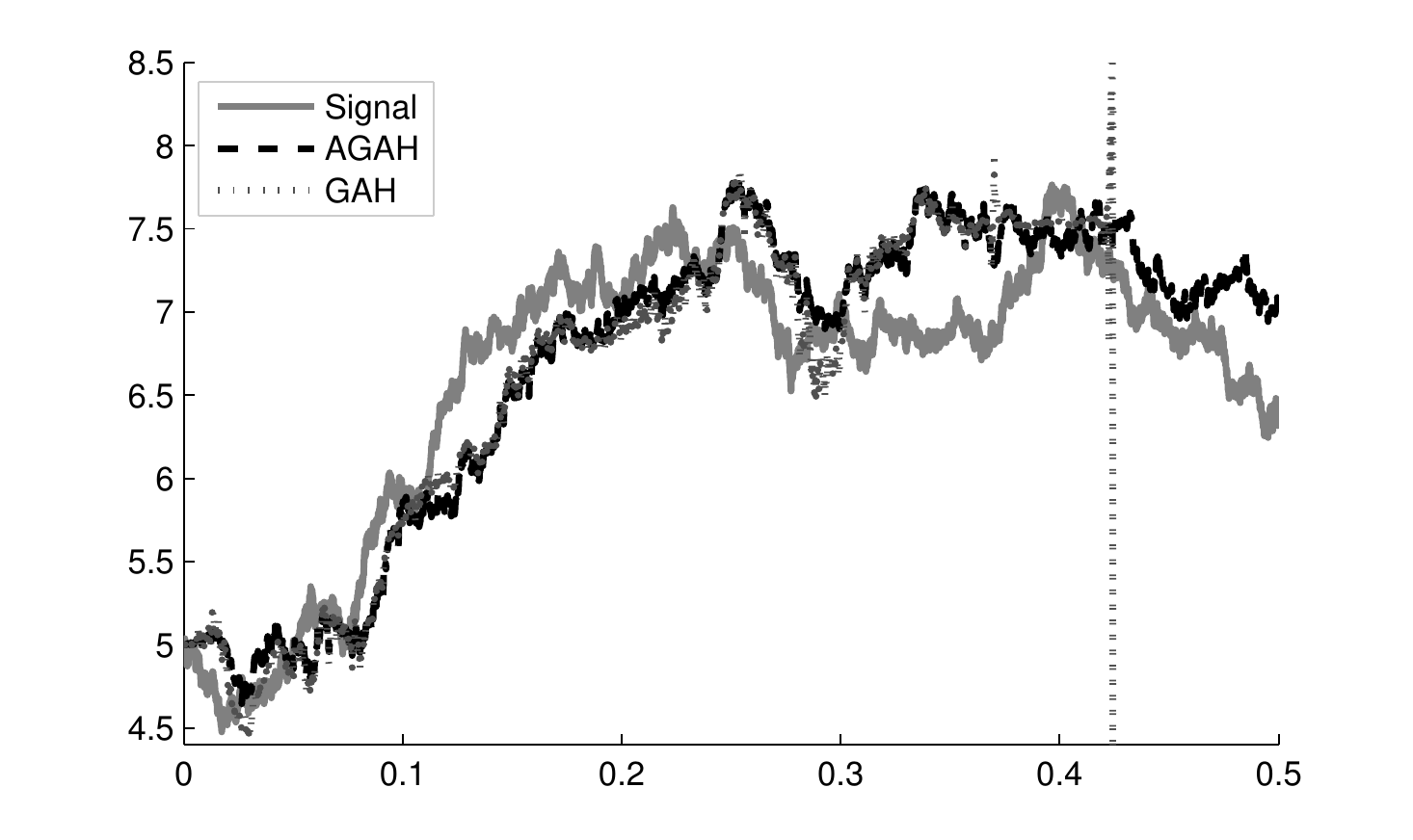}
		\put(8,67){$\hat x_t$}
		\put(94,3){$t$}
	\end{overpic}
	\begin{overpic}[width=8cm,height=5.5cm]{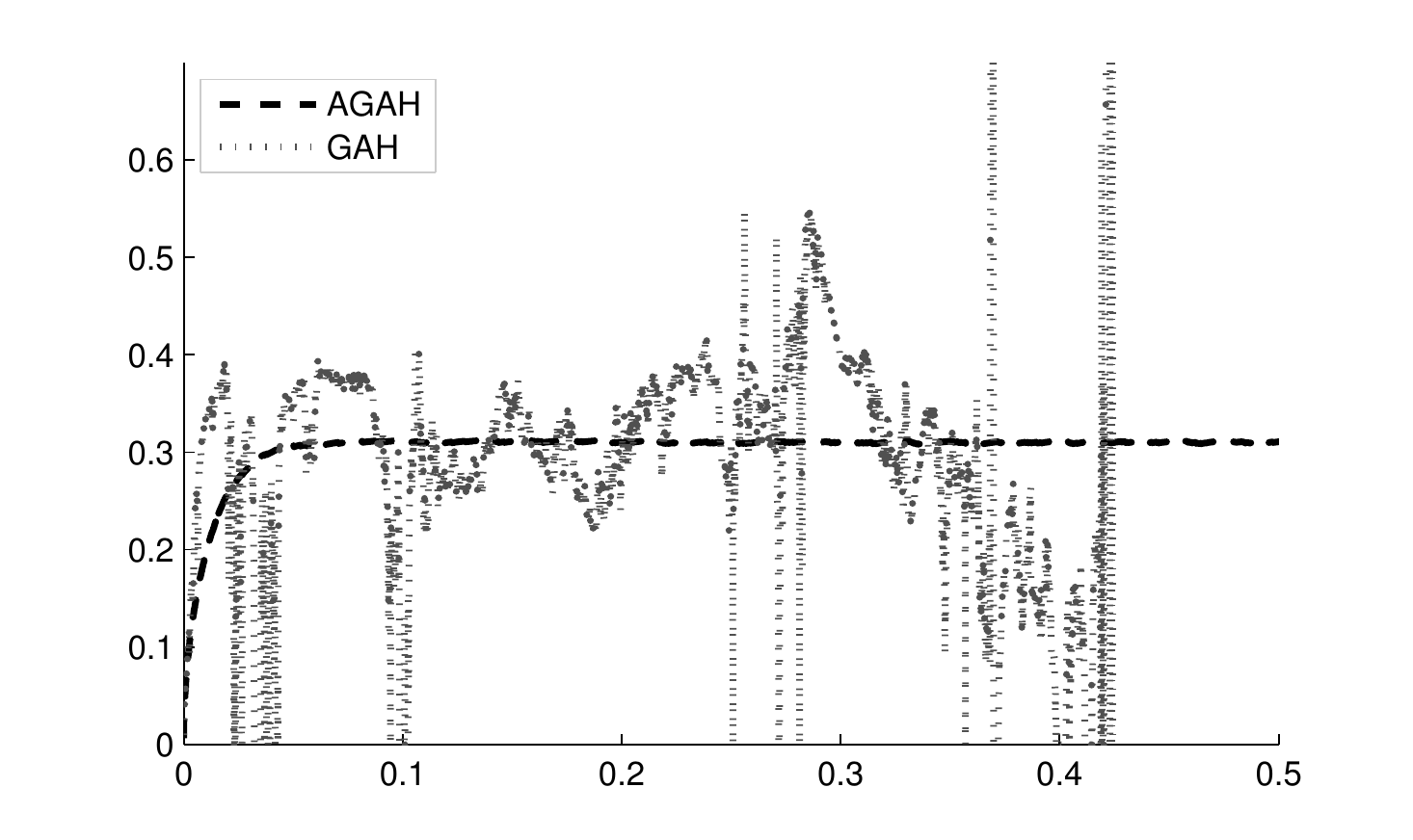}
		\put(8,67){$\hat \sigma_t$}
		\put(94,3){$t$}
	\end{overpic}
}
 \parbox{15.8cm}{\caption{\label{fig:estimation-linear-nb}  \emph{Comparison of the ordinary Galerkin approximation (GAH)
 with  the adaptive Galerkin approximation (AGAH)}. Both approximations work with $n=20$  Hermite basis functions.
In this example  $h=20$, $\lambda=10$ (these parameter values correspond to a very low observation noise).
Note that  the ordinary Galerkin filter  performs poorly, whereas the AGAH  performs reasonably well. }}
  \end{center}
\end{figure}


\begin{figure}[p.22]
\begin{center}
\begin{tabular}{cc}
	\begin{overpic}[width=8cm,height=5.5cm]{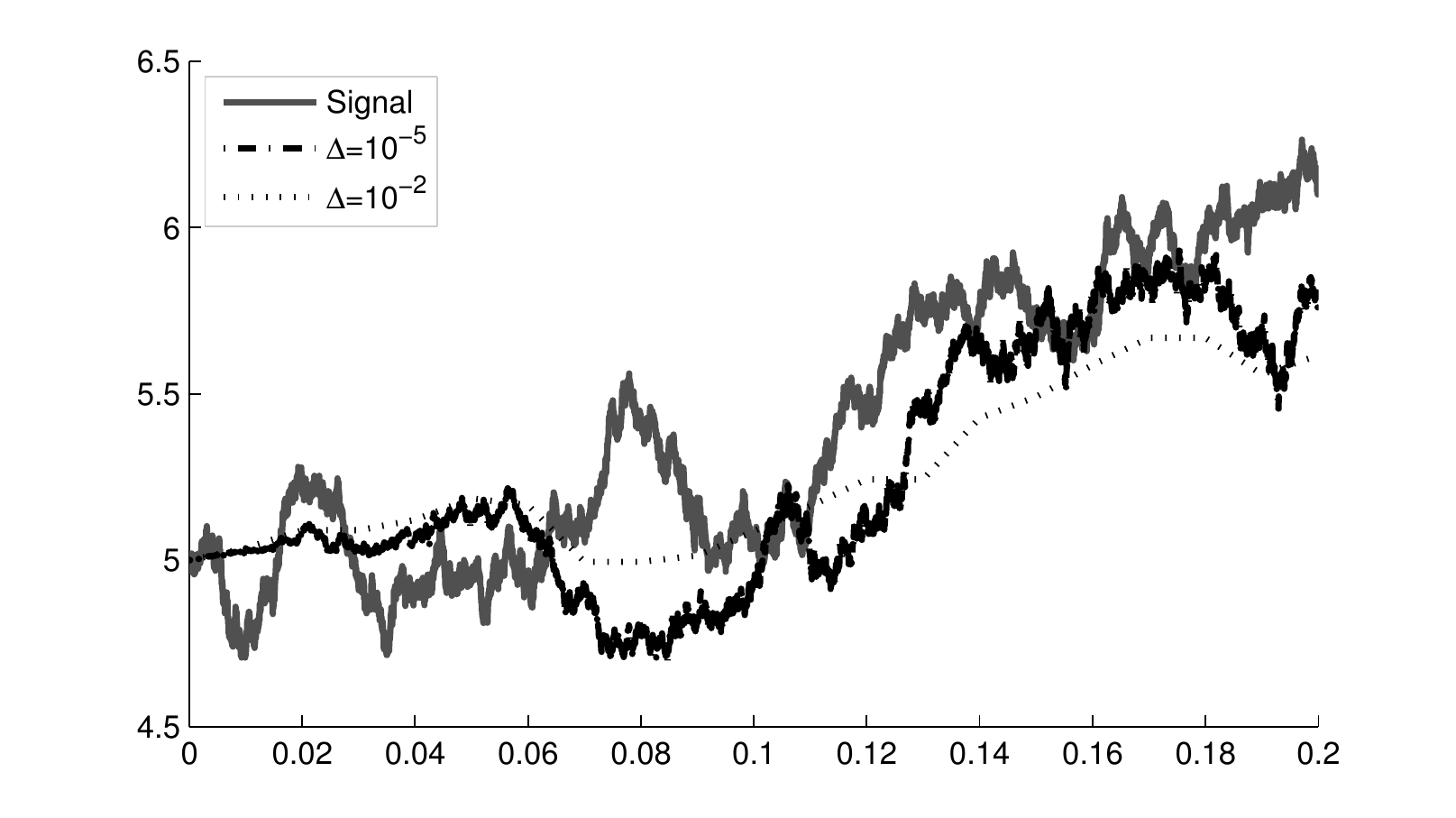}
		\put(24,75){Splitting-up approximation}
		\put(8,67){$\hat x_t$}
		\put(94,3){$t$}
	\end{overpic}
	\begin{overpic}[width=8cm,height=5.5cm]{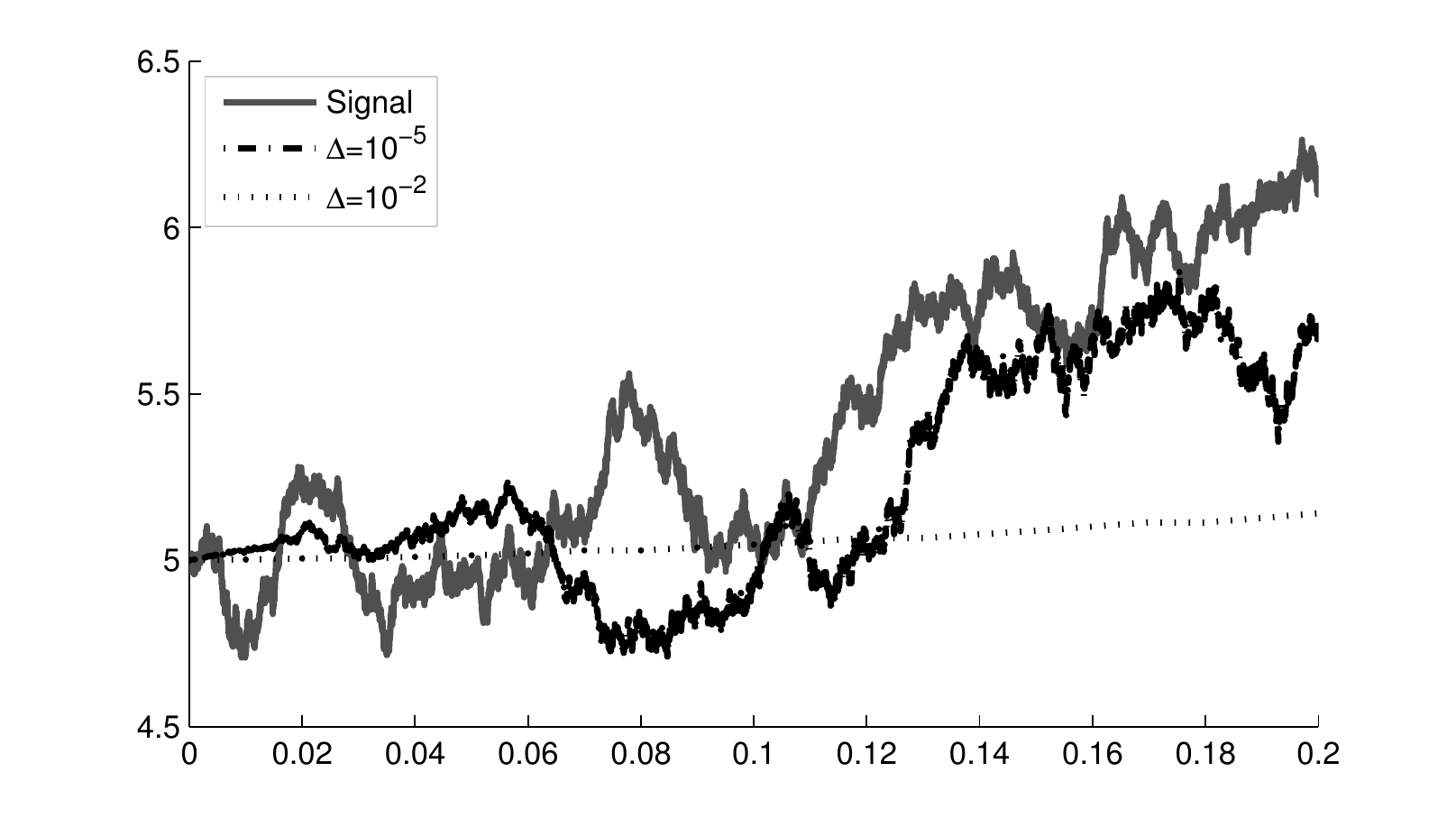}
		\put(20,75){Euler-Maruyama approximation}
		\put(8,67){$\hat x_t$}
		\put(94,3){$t$}
	\end{overpic}
 \end{tabular}
\parbox{15.8cm}{
\caption{\label{fig:estimation-SU-vs-Euler} \emph{Comparison of
splitting-up approximation (left) and Euler-Maruyama approximation (right).}  In
this figure, we compare the results obtained by these two methods
for different  $\Delta$.  The results obtained coincide for $\Delta=10^{-5}$,  but the splitting-up approximation is more accurate  when $\Delta$ is large. In particular, the splitting up method
provides a good estimate even for $\Delta=10^{-2}$.   }}
\end{center}
\end{figure}

\begin{figure}
\begin{center}
\begin{tabular}{cc}
	\begin{overpic}[width=6cm,height=7.5cm]{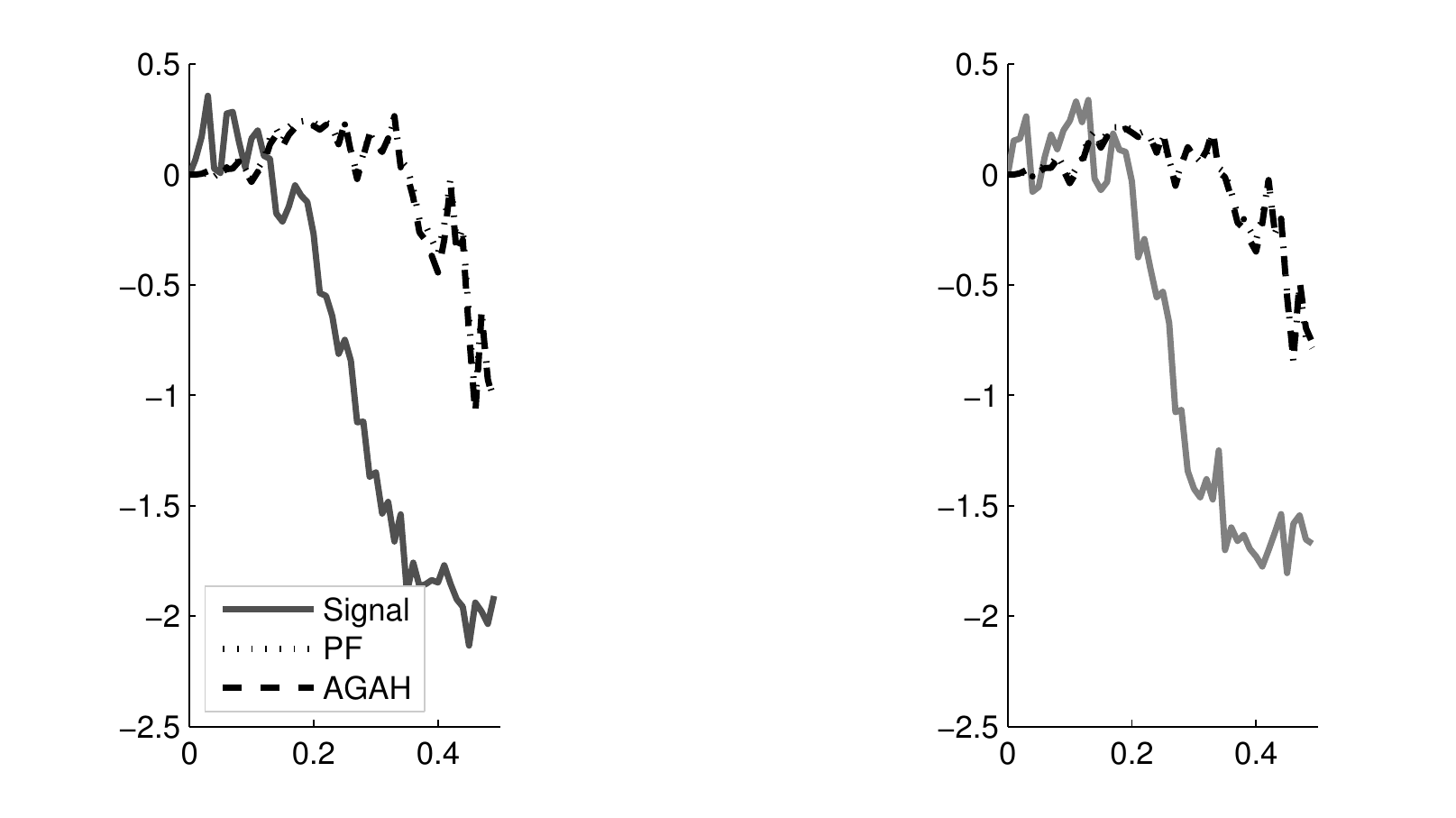}
		\put(21,97){$\hat x_t$}
	\end{overpic}
	\begin{overpic}[width=5.2cm,height=7.5cm]{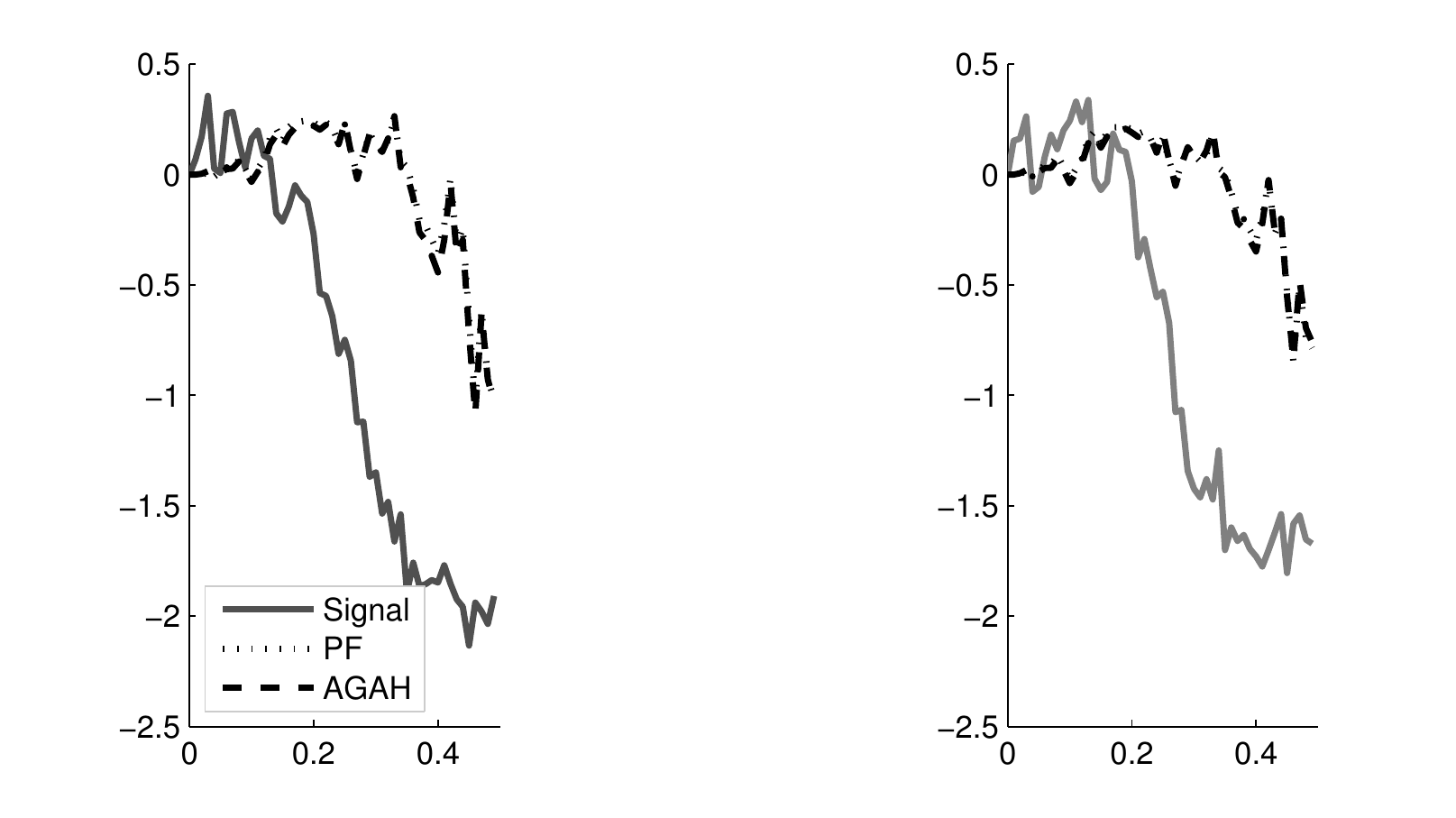}
	\end{overpic}
	\begin{overpic}[width=5cm,height=7.5cm]{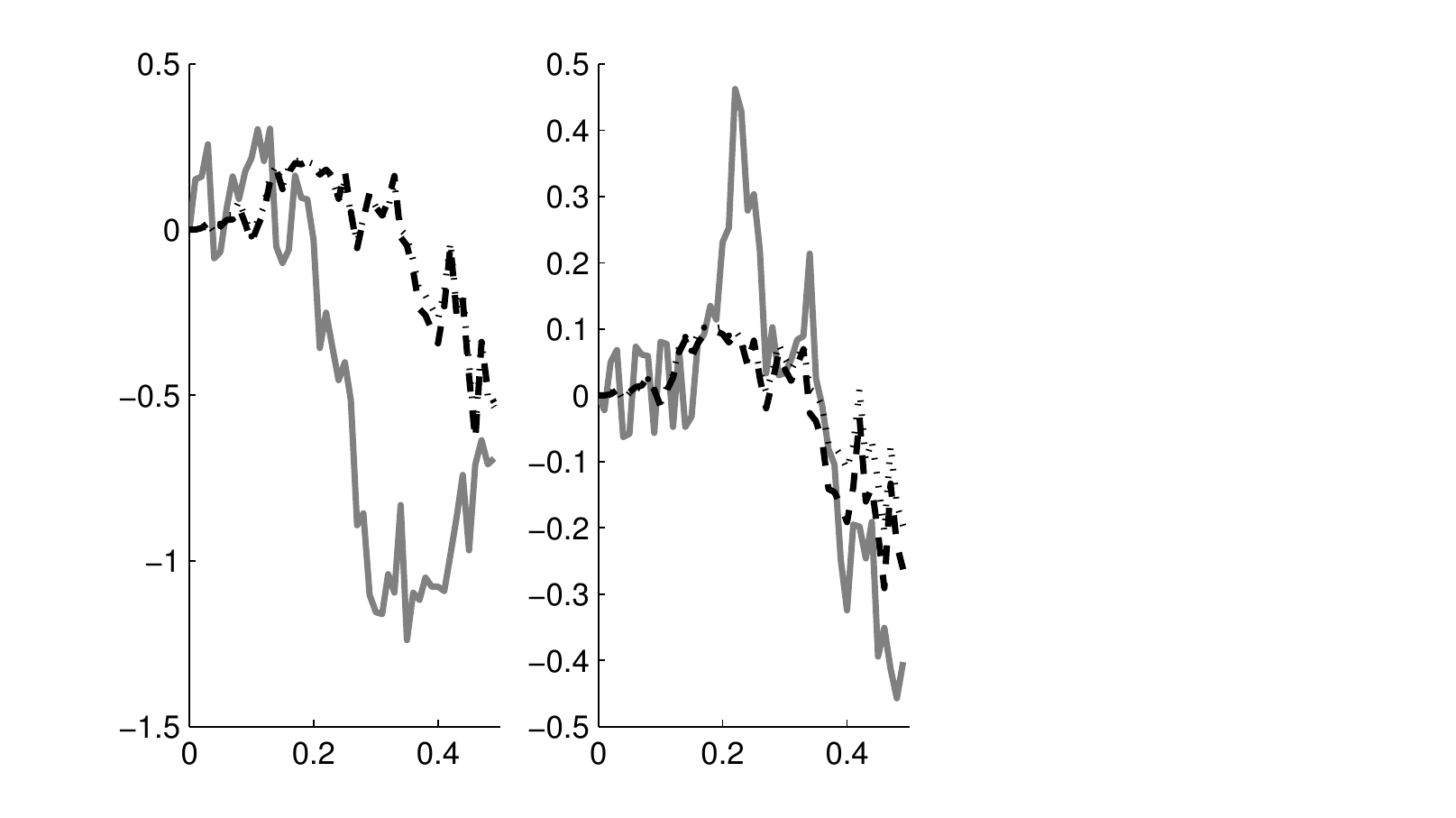}
		\put(56,5){$t$}
	\end{overpic}
\end{tabular}
 \parbox{15.8cm}{\caption{\label{fig:estimation-multi}  \emph{The multidimensional case.} Comparison of the adaptive Galerkin approximation (AGAH)
with the particle  filter (PF) for the case of a multi-dimensional signal process $X$. We show plots of the conditional mean for
a basis of size $n=4^5=1024$.
For the  particle filter we took $10^3$ particles.  The computation time was 12 seconds (PF) and 9
seconds  (AGAH). The results obtained by both methods are very close to each other. }}
\end{center}
\end{figure}

\clearpage

\begin{table}[h]
\begin{center}
\begin{tabular}{l|ll|ll|ll|lll} \toprule
 & $n$ & Time & RMSE & s.e. & EDM & s.e. & EDV & s.e. & \\
 \midrule
PF  &100 & 41.8 & 0.1685 & 0.4774 & 0.0179 & 0.0621 & 0.003 & 0.0122 &  \\
     &500 & 183 & 0.1518 & 0.4285 & 0.0039 & 0.0166 & 8e-04 & 0.0037 &  \\
     &1000 & 366 & 0.1530 & 0.4325 & 0.0024 & 0.0109 & 4e-04 & 0.0016 &  \\
\midrule
GAG  
     &25 & 10.9 & 0.3869 & 1.2997 & 0.2316 & 1.0390 & 0.6159 & 2.5379 &  \\
     &30 & 16.7 & 0.1632 & 0.4445 & 0.0151 & 0.1061 & 0.0030 & 0.0959 &  \\
     &40 & 27.1 & 0.1641 & 0.4495 & 0.0159 & 0.1235 & 0.0044 & 0.1385 &  \\
     &50 & 46.8 & 0.1850 & 0.6182 & 0.0400 & 0.4320 & 0.2659 & 4.5433 &  \\
     &120 & 927 & 0.1580 & 0.4276 & 0.0075 & 0.0221 & 5e-04 & 0.0042 &  \\
\midrule
GAH  &8 & 1.9 & 9.1625 & 228.80 & 9.1831 & 231.83 & 0.0438 & 1.2288 &  \\
     &10 & 2.0 & 2.0444 & 42.221 & 1.8825 & 42.117 & 0.1273 & 6.3878 &  \\
     &12 & 2.1 & 0.6722 & 3.4018 & 0.5415 & 3.1160 & 0.0045 & 0.0932 &  \\
     &24 & 3.7 & 0.1494 & 0.4163 & 0.0005 & 0.0019 & 0.0001 & 0.0017 &  \\
     &40 & 7.2 & 0.1494 & 0.4162 & 0.0004 & 0.0015 & 8.7e-05 & 0.0004 &  \\
\midrule
AGAH  &8 & 23.3 & 0.1518 & 0.4232 & 0.0007 & 0.0029 & 0.0002 & 0.0009 &  \\
     &12 & 32.7 & 0.1494 & 0.4193 & 0.0006 & 0.0022 & 9.3e-05 & 0.0004 &  \\
     &16 & 50.8 & 0.1493 & 0.4193 & 0.0006 & 0.0023 & 9.8e-05 & 0.0004 &  \\
\bottomrule
\end{tabular}

\vspace{4mm}

\parbox{14.8cm}{
\caption[]{\label{tab:simulations}
\emph{Performance of the different algorithms in terms of root mean square error (RMSE).} We consider the particle filter (PF), the Galerkin approximation with Gaussian basis (GAG), the Galerkin approximation with Hermite basis (GAH) and the adaptive Galerkin approximation with Hermite basis (AGAH). The chosen parameter values are $h=5.5$, $\sigma=1$,and $\lambda=10$ with inital distribution   $\cN(2,1)$ (as in Figure \ref{fig:estimation-linear-nb}) and we consider the time-interval  $[0,0.5]$. Besides computation time and RMSE we plot the estimated deviation \footnotemark\ in mean (EDM) and estimated deviation in variance (EDV) by comparing the method to the solution of a particle filter with $10^4$ basis functions and their standard errors. A low value suggests that the method is very close to the exact solution. The methods with the Hermite basis (GAH, AGAH) outperform the methods with the Gaussian basis (GAG) by a large scale: they are at the same time faster and more accurate. In this case with average precision GAH with at least 24 basis functions provides  fast and accurate results. In a setup with lower observation noise, AGAH shows a better performance than GAH, compare Figure \ref{fig:estimation-linear-nb}.}}
\end{center}
\end{table}

\footnotetext{More precisely, we consider
 	\begin{align*}
 		\text{EDM} &:= \frac{1}{mL}   \sum_{j=1}^m \sum_{i=1}^L ( \hat{X}^j(t_i)-\tilde{X}^j(t_i) )^2  \quad \text{ and }\quad
		\text{EDV} := \frac{1}{mL}     \sum_{j=1}^m \sum_{i=1}^L ( \hat{V}^j(t_i)-\tilde{V}^j(t_i) )^2,
    \end{align*}
where $\hat{X}^j(t_i)$ is the filtering estimate at time $t_i$ in the $j$-th simulation  and $\tilde{X}^j(t_i)$ is the result provided by a particle filter with $10^4$ particles, which is close to the explicit solution of the problem. Similarly, $\hat{V}^j(t_i)$ is the conditional variance obtained by  filtering  at time $t_i$ in the $j$-th simulation and $ \tilde{V}^j(t_i)$ is the conditional variance obtained by  branching particle filter with $10^4$ particles. The number of simulations is $m=100$.}
\clearpage

\clearpage

\appendix
\section{Additional Proofs}
\label{app:Proofs}

\begin{proof}[Proof of Lemma \ref{lem:Lbounds}.]
 Our aim is to show that for all $n \in \N$  it holds that
\begin{align}\label{eq:A1}
\|L^{n}\|^{\frac{1}{n}} \leq\frac{\sqrt{T}\bar{S}(\|B\|^2+\|C\|^2)^{\frac{1}{2}}}{(n!)^{\frac{1}{2n}}}.
\end{align}
The operator $L$ was defined in \eqref{eq:conti-L}. We rewrite $L\xi$ as
$$ (L\xi)(t) =: \int_0^t E(t,s) \xi_{s-} d M_{s} $$
with the $(l+1)$-dimensional martingale $M:=(Z^\top,Y)^\top$ and $E(t,s):=S_{t-s}(B^\top,C)^\top$.
Iterative application of $L$ gives that
\begin{align*}
(L^{n}\xi)(t) &=
\int_0^t
 E(t,t_1) \bigg(\int_0^{t_1} E (t_1,t_2)\Big(\ldots  \int_0^{t_{n-1}} E(t_{n-1},t_n) \xi_{t_n} dM_{t_n}\ldots \Big)dM_{t_2} \bigg)dM_{t_1}.
 \end{align*}
To compute   $|L^{n} \xi|_{T}=\sup_{t\in[0,T]}\big(\E^0\big( \|(L^n\xi)(t)\|_H^2 \big)\big)^{\nicefrac{1}{2}}$,
 note that the quadratic variation of $M$ is $<M>_t = I_{l+1} t$ where $I_{l+1}$ is the identity matrix on $\R^{l+1}$.
The It\^o-isometry therefore yields
\begin{align*}
\lefteqn{\E^0\big( \|(L^n\xi)(t)\|_H^2 \big) } \quad \\
 &= \E^0 \bigg( \int_0^t \Big\|E(t,t_1) \bigg( \int_0^{t_1} E (t_1,t_2)\Big(\ldots  \int_0^{t_{n-1}} E(t_{n-1},t_n) \xi_{t_n}
dM_{t_n}\ldots \Big)dM_{t_2} \bigg) \Big\|^2 dt_1 \bigg)\\
& \leq  \bar{S}^2 (\| B\|^2+\|C\|^2)   \int_0^T  \E^0 \bigg(\int_0^{t_1} \Big\| E (t_1,t_2)\Big(\ldots  \int_0^{t_{n-1}} E(t_{n-1},t_n) \xi_{t_n}
dM_{t_n}\ldots \Big)dM_{t_2}  \Big\|_H^2 \bigg)dt_1\\
&\leq   \bar{S}^{2n} (\| B\|^2+\|C\|^2)^{n} |\xi|_T^2\cdot   \int_0^T \int_0^{t_1} \ldots \int_0^{t_{n-1}}  \ d{t_n},\ldots, d {t_1} \\
&=  \bar{S}^{2n} (\| B\|^2+\|C\|^2)^{n} |\xi|_T^2\cdot \frac{T^n}{n!}
\end{align*}
and we obtain \eqref{eq:A1}.
\end{proof}

\begin{proof}[Proof of Proposition \ref{cor7.3}.]
As remarked after Theorem \ref{Thm:convergence_linear_part}, in
\eqref{eq:convenientcondition} the claim is proved  if we can show that $\cup_n V_n$ is
dense in $V = H^1 (\R^d)$. Here $V_n=\text{span}\{e_1,\ldots,e_{n}\}$ and $V=H^1(\R^d)$.
Let $C_0^{\infty}$ be the set of smooth functions with compact support.   Then, by
Proposition 1 and Theorem 4 in \citeN{bib:Bongioanni-Torrea}, 
 there exist for all $u\in C_0^{\infty}$   a sequence $u_n\in \cup_n V_n$ such that
$\|u-u_n\|_V\rightarrow 0$  as $n\rightarrow \infty.$ Since $C_0^{\infty}$ is dense in
$V$ the claim follows.
\end{proof}

Next we turn to the determination of the coefficient matrices from Example \ref{exam:linear_model}. Our starting point are  well-known recursive reslationships  for  Hermite polynomials defined in \eqref{eq:def-hermite}: it holds that  $f_i^\prime = x f_i - f_{i+1}$ and $(f_i)^\prime = i f_{i-1}$. This gives
$xf_i(x) = i f_{i-1}(x)+f_{i+1}(x)$. These relations can be used to derive a number of useful relationships for the Hermite basis functions:
\begin{lemma} \label{lemma:auxiliary-ralations-Hermite}
For the Hermite basis $(e_i)_{i \ge 1}$ defined in \eqref{eq:def-hermite-basis} it holds that
\begin{align*}
x e_i(x) &= \sqrt{i-1} \, e_{i-1}(x) + \sqrt{i} \, e_{i+1} (x)\\
x^2 e_i(x) &= \sqrt{(i-1)(i-2)} \, e_{i-2}(x) +(2i-1) e_i(x) + \sqrt{i(i+1)} \, e_{i+2}(x) \\
(e_i)^\prime &= \frac{1}{2}\big( \sqrt{i-1} \, e_{i-1} - \sqrt{i} \, e_{i+1}\big) \\
x\, (e_i(x)) ^\prime &=  \frac{1}{2}\big( \sqrt{(i-1)(i-2)}\, e_{i-2} -e_i - \sqrt{i(i+1)}\, e_{i+2}\big).\\
(e_i)^{\prime \prime} &= \frac{1}{4}\big( \sqrt{(i-1)(i-2)} \, e_{i-2} + (1-2i) \, e_i + \sqrt{i(i+1)} \, e_{i+2}\big).
\end{align*}
\end{lemma}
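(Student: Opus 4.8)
The plan is to reduce all five identities to two elementary facts and then bootstrap. First I would record the two relations already stated just above the lemma, namely the three-term recurrence $x f_i(x) = i f_{i-1}(x) + f_{i+1}(x)$ and the differentiation rule $f_i'(x) = i f_{i-1}(x)$. To these I would add one observation about the weight: writing $g := \sqrt{\phi}$ with $\phi(x) = (2\pi)^{-1/2} e^{-x^2/2}$, one has $g'(x) = -\tfrac{x}{2} g(x)$. Since the basis is $e_i = \frac{1}{\sqrt{(i-1)!}}\, g\, f_{i-1}$, every statement about the $e_i$ becomes a statement about $g f_{i-1}$, and the factorial normalisations will do all the combinatorial work.

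For the first identity I would substitute the recurrence at index $i-1$, i.e.\ $x f_{i-1} = (i-1) f_{i-2} + f_i$, multiply by $g/\sqrt{(i-1)!}$, and re-express $g f_{i-2}$ and $g f_i$ in terms of $e_{i-1}$ and $e_{i+1}$. The prefactors collapse via $(i-1)\sqrt{(i-2)!}/\sqrt{(i-1)!} = \sqrt{i-1}$ and $\sqrt{i!}/\sqrt{(i-1)!} = \sqrt{i}$, giving $x e_i = \sqrt{i-1}\, e_{i-1} + \sqrt{i}\, e_{i+1}$. The second identity then follows by applying the first one twice: the coefficient of $e_i$ accumulates as $(i-1) + i = 2i-1$, and the outer terms pick up $\sqrt{(i-1)(i-2)}$ and $\sqrt{i(i+1)}$.

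The derivative identity is the one step that requires genuine care, and I expect it to be the main obstacle. Differentiating $e_i = \frac{1}{\sqrt{(i-1)!}}\, g\, f_{i-1}$ by the product rule yields $g' f_{i-1} + g f_{i-1}'$; here $g' f_{i-1} = -\tfrac{x}{2}\, g f_{i-1}$ introduces an unwanted factor of $x$. The trick is to absorb it through the recurrence, writing $x f_{i-1} = (i-1) f_{i-2} + f_i$, while $g f_{i-1}' = (i-1)\, g f_{i-2}$. The two $f_{i-2}$-contributions partially cancel, leaving $\tfrac{1}{2}(i-1)\, g f_{i-2} - \tfrac{1}{2}\, g f_i$, which converts to $\tfrac{1}{2}\big(\sqrt{i-1}\, e_{i-1} - \sqrt{i}\, e_{i+1}\big)$. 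This is precisely the point where the Gaussian weight interacts with the polynomial recurrence, so I would write out the cancellation explicitly.

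The remaining two identities are immediate corollaries. Multiplying the derivative identity by $x$ and applying the $x e_j$ formula to each term gives the expression for $x\,(e_i)'$, with the coefficient of $e_i$ equal to $\tfrac{1}{2}\big((i-1) - i\big) = -\tfrac{1}{2}$. Differentiating the derivative identity once more and substituting it into itself for $(e_{i-1})'$ and $(e_{i+1})'$ gives $(e_i)''$, with the $e_i$-coefficient $-\tfrac{1}{4}(2i-1) = \tfrac{1}{4}(1-2i)$. Beyond this bookkeeping I anticipate no further difficulty.
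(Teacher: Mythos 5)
Your proposal is correct and follows essentially the same route as the paper: both rest on the recurrence $x f_{i} = i f_{i-1} + f_{i+1}$, the Hermite differentiation rules, and the weight identity $(\sqrt{\phi})' = -\tfrac{x}{2}\sqrt{\phi}$, with the factorial normalisations collapsing exactly as you describe, and with the last two identities obtained by substituting the first and third identities into themselves. The only (immaterial) difference is in the derivative step: the paper plugs $f_{i-1}' = x f_{i-1} - f_{i}$ into the product rule, obtaining $(e_i)' = \tfrac{1}{2}\,x\, e_i - \sqrt{i}\, e_{i+1}$ and then invokes the first identity, whereas you use $f_{i-1}' = (i-1) f_{i-2}$ and absorb the stray factor of $x$ via the recurrence --- the same cancellation in different clothing.
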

\begin{proof}
Regarding the first result, note that
\begin{align*}
x e_i (x) &= \sqrt{\frac{\phi (x)}{(i-1)!}} (x f_{i-1} (x) ) =  \sqrt{\frac{\phi(x)}{(i-1)!}} ((i-1) f_{i-2}(x)+f_{i}(x)  )  \\
& = \sqrt{i-1} e_{i-1} (x)+ \sqrt{i} e_{i+1} (x).
\end{align*}
Using this expression twice on $x^2 e_i(x) = x \, (x  e_i (x))$, we  obtain the second result. For the remaining two expressions
we compute the derivative of $e_i$ and use the recursive expression for $f_i^\prime$ given above.  We obtain
\begin{align*}
(e_i(x)) ^\prime &= \frac{1}{\sqrt{(i-1)!}} \frac{(-x)\phi(x)}{2\sqrt{\phi(x)}} f_{i-1}(x) + \sqrt{\frac{\phi(x)}{(i-1)!}} \big(x f_{i-1}(x) - f_i(x)\big) \\
&=  \frac{1}{2} x e_i(x)  - \sqrt{i} \, e_{i+1}(x) =  \frac{1}{2} \big( \sqrt{i-1} \, e_{i-1}(x) - \sqrt{i} \, e_{i+1}(x))
\intertext{and}
e_i ^{\prime \prime}  &= \frac{1}{2}\big( \sqrt{i-1} \, e_{i-1}^\prime - \sqrt{i} \, e_{i+1}^\prime)
= \frac{1}{4} \big( \sqrt{(i-1)(i-2)} \, e_{i-2} +(1-2i) \, e_i + \sqrt{i(i+1)}\, e_{i+2}).
\end{align*}
We conclude by noting that
\begin{align*}
x\, (e_i(x)) ^\prime &=   \frac{1}{2}\big( \sqrt{i-1} \, x e_{i-1} - \sqrt{i} \, x e_{i+1} \big) \\
&=  \frac{1}{2}\big( \sqrt{(i-1)(i-2)}\, e_{i-2} -e_i - \sqrt{i(i+1)}\, e_{i+2}\big).
\end{align*}

\end{proof}

\begin{proof}[Proof of Lemma~\ref{lemma:coefficients-kalman}]
We start by observing that
\begin{align*}
\ccL e_j(x) &= bx\, e_j^\prime(x) + \frac{\sigma^2}{2}\, e_j^{\prime\prime}(x) \\
&= \frac{b}{2} \big( \sqrt{(j-1)(j-2)}\, e_{j-2} -e_j - \sqrt{j(j+1)}\, e_{j+2} \big) \\
&+ \frac{\sigma^2}{8} \big( \sqrt{(j-1)(j-2)} \, e_{j-2} + (1-2j) \, e_j + \sqrt{j(j+1)} \, e_{j+2}  \big).
\end{align*}
The expression for $(e_i,\ccL e_j)$ now follows by orthonormality of the Hermite basis. In a similar way
\begin{align*}
h \, x \, e_j(x) &= h \big( \sqrt{j-1} \, e_{j-1}(x) + \sqrt{j} \, e_{j+1} (x) \big),
\end{align*}
and the second expression follows. Finally, note that
\begin{align*}
(\lambda x^2 -1 ) e_j(x) &= \lambda \big(\sqrt{(j-1)(j-2)} \, e_{j-2}(x) +(2j-1) e_j(x) + \sqrt{j(j+1)} \, e_{j+2}(x) \big) - e_j(x)
\end{align*}
and we conclude.
\end{proof}

\begin{proof}[Proof of Lemma~\ref{lemma:hermite-moments}] First, note that,
\begin{align*}
({1},e_{i+1})
	&= \frac{1}{\sqrt{i!}}\int (2\pi)^{-\frac{1}{4}}e^{-\frac{x^2}{4}} f_{i}(x)dx\\
	&= \frac{\sqrt{2} (2\pi)^{-\frac{1}{4}}}{\sqrt{i!}}  \int e^{-\frac{x^2}{2}} f_i(\sqrt{2}x)dx\\
	&= \frac{\sqrt{2} (2\pi)^{-\frac{1}{4}}}{\sqrt{i!}} \int e^{-\frac{x^2}{2}} \sum_{k=0}^i \vartheta^i_k  {2}^\frac{k}{2} x^k dx\\
	&= \frac{\sqrt{2} (2\pi)^{\frac{1}{4}}}{\sqrt{i!}} \int \frac{1}{\sqrt{2\pi}}e^{-\frac{x^2}{2}} \sum_{k=0}^i \vartheta^i_k  {2}^\frac{k}{2} \sum_{j=0}^k \iota^k_j f_j(x) dx\\
	&= \frac{\sqrt{2} (2\pi)^{\frac{1}{4}}}{\sqrt{i!}}  \sum_{k=0}^i \sum_{j=0}^k \vartheta^i_k  {2}^\frac{k}{2} \iota^k_j \int \frac{1}{\sqrt{2\pi}}e^{-\frac{x^2}{2}} f_j(x)f_0(x) dx\\
	&= \frac{\sqrt{2} (2\pi)^{\frac{1}{4}}}{\sqrt{i!}}  \sum_{k=0}^i \vartheta^i_k {2}^\frac{k}{2}  \iota^k_0 .
\end{align*}
 since 
any power of $x$ can be represented as linear combination of the Hermite polynomials. The last step follows by orthonormality of the Hermite basis. An analogous argument
with the constant function $1$ replaced by $x^j$ gives the result.
\end{proof}

\begin{proof}[Proof of Lemma~\ref{lemma:projection-of-initial-density}]
The main difficulty is the noncentrality of $q_0$. Our main tool are the representations
$f_j(x)=\sum_{k=0}^j \vartheta^j_k x^k$
 and  $x^i = \sum_{k=0}^i \iota^i_k f_k(x).$
 For arbitrary $a$ and $b$, a non-central Hermite polynomial has a representation in terms of central Hermite polynomials as follows:
	\begin{align*}
		f_j(a + b x) &= \sum_{k=0}^j \vartheta^j_k \, (a + b x)^k \\
		&= \sum_{k=0}^j \vartheta^j_k \, \sum_{m=0}^k {k \choose m} \, a^{k-m} \, b ^m \, x^m.
	\end{align*}
	Using the representation $x^m = \sum_{r=0}^m \iota^m_r f_r(x) $ leads to
	\begin{align*}
		f_j(a + b x) &= \sum_{k=0}^j \sum_{m=0}^k \sum_{r=0}^m    \vartheta^j_k \, {k \choose m} \, a^{k-m} \, b ^m \iota^m_r f_r(x) \\
		&= \sum_{r=0}^j\Big( \sum_{m=r}^j \sum_{k=m}^j  \vartheta^j_k \, {k \choose m} \, a^{k-m} \, b ^m \iota^m_r \Big) f_r(x) \\
		&=: \sum_{r=0}^j A_r(j,a,b) f_r(x).
	\end{align*}
	To obtain our result, we start from
	\begin{align*}
		( q_0, e_j ) &= \int \frac{1}{\sqrt{2 \pi \sigma_0^2 }}e^{-\frac{(x-\mu_0)^2}{2 \sigma_0^2}} \frac{1}{\sqrt{(j-1)!}} \frac{1}{(2\pi)^{-1/4}} e^{-\frac{x^2}{4}} f_{j-1}(x) dx.
	\end{align*}
	Observe that
	$$ \frac{(x-\mu_0)^2}{2 \sigma_0^2}+ \frac{x^2}{4} = \frac{(x-a)^2}{2 b^2} - d $$
	with $a$, $b$ and $d$ as specified in the Lemma. Furthermore,
	\begin{align*}
		\frac{1}{\sqrt{2\pi }} \int e^{-\frac{(x-a)^2}{2 b^2}}  f_{j-1}(x) dx & = \int \frac{1}{\sqrt{2\pi}} e^{-\frac{x^2}{2}} f_{j-1}(bx+a) dx  \\
		&= \sum_{r=0}^{j-1} A_r (j-1,a,b) \int \phi(x) \,  f_r(x) f_0(x) dx \\
		&= A_0 (j-1,a,b)
	\end{align*}
	by orthogonality of the Hermite polynomials.  Summarizing, we obtain that
	\begin{align*}
		( q_0, e_j ) &= \frac{1}{\sqrt{ \sigma_0^2 (j-1)!}}\frac{e^d}{(2\pi)^{-1/4}}   A_0(j-1,a,b) \\
		 &=  \frac{1}{\sqrt{ \sigma_0^2(j-1)!}} \frac{e^d}{(2\pi)^{-1/4}} \sum_{m=0}^{j-1} \sum_{k=m}^{j-1}  \vartheta^{j-1}_k \, {k \choose m} \, a^{k-m} \, b^m \iota_0^m.
	\end{align*}

\end{proof}

\bibliographystyle{chicago}

\end{document}